\newtheorem{theorem}{Theorem}
\theoremstyle{plain}
\newtheorem{corollary}{Corollary}
\newtheorem{lemma}{Lemma}
\newtheorem{problem}{Problem}
\newtheorem{remark}{Remark}
\numberwithin{equation}{section}
\begin{document}
\title[Monotonicity and convexity of a function involving digamma one]{The
monotonicity and convexity of a function involving digamma one and their
applications}
\author{Zhen-Hang Yang}
\address{System Division, Zhejiang Province Electric Power Test and Research
Institute, Hangzhou, Zhejiang, China, 310014}
\email{yzhkm@163.com}
\date{June 3, 2014}
\subjclass[2000]{Primary 11B83; Secondary 11B73}
\keywords{psi function, harmonic numbers, Euler's constant, monotonicity,
convexity, inequalities}
\thanks{This paper is in final form and no version of it will be submitted
for publication elsewhere.}

\begin{abstract}
Let $\mathcal{L}(x,a)$ be defined on $\left( -1,\infty \right) \times \left(
4/15,\infty \right) $ or $\left( 0,\infty \right) \times \left( 1/15,\infty
\right) $ by the formula%
\begin{equation*}
\mathcal{L}(x,a)=\tfrac{1}{90a^{2}+2}\ln \left( x^{2}+x+\tfrac{3a+1}{3}%
\right) +\tfrac{45a^{2}}{90a^{2}+2}\ln \left( x^{2}+x+\allowbreak \tfrac{%
15a-1}{45a}\right) .
\end{equation*}
We investigate the monotonicity and convexity of the function $x\rightarrow
F_{a}\left( x\right) =\psi \left( x+1\right) -\mathcal{L}(x,a)$, where $\psi 
$ denotes the Psi function. And, we determine the best parameter $a$ such
that the inequality $\psi \left( x+1\right) <\left( >\right) \mathcal{L}%
(x,a) $ holds for $x\in \left( -1,\infty \right) $ or $\left( 0,\infty
\right) $, and then, some new and very high accurate sharp bounds for pis
function and harmonic numbers are presented. As applications, we construct a
sequence $\left( l_{n}\left( a\right) \right) $ defined by $l_{n}\left(
a\right) =H_{n}-\mathcal{L}\left( n,a\right) $, which gives extremely
accurate values for $\gamma $.
\end{abstract}

\maketitle

\section{\textsc{Introduction}}

For $x>0$ the classical Euler's gamma function $\Gamma $ and psi (digamma)
function $\psi $ are defined by%
\begin{equation}
\Gamma \left( x\right) =\int_{0}^{\infty }t^{x-1}e^{-t}dt,\text{ \ \ \ \ }%
\psi \left( x\right) =\frac{\Gamma ^{\prime }\left( x\right) }{\Gamma \left(
x\right) },  \label{Gamma}
\end{equation}%
respectively. The derivatives $\psi ^{\prime }$, $\psi ^{\prime \prime }$, $%
\psi ^{\prime \prime \prime }$, ... are known as polygamma functions (see 
\cite{Anderson.125.11.1997}).

The gamma and polygamma functions play a central role in the theory of
special functions and have extensive applications in many branches, such as
mathematical physics, probability, statistics, engineering. In the recent
past, numerous papers have appeared providing various inequalities for gamma
and polygamma functions. A detailed list of references is given in \cite%
{Merkle.0.4.2010}. In addition, some new results can be found in \cite%
{Alzer.285.2003}, \cite{Alzer.16.2004}, \cite{Alzer.20.7.2007}, \cite%
{Alzer.267.2011}, \cite{Batir.6.4.2005.art.103}, \cite{Batir.328.1.2007}, 
\cite{Batir.91.2008}, \cite{Batir.12.1.2009.art.9}, \cite{Batir.2011.inprint}%
, \cite{Berg.1.4.2004}, \cite{Chen.3.1.2011}, \cite{Chen.336.2007}, \cite%
{Chen.217.7.1.2010}, \cite{Clark.1.129.2003}, \cite{Elbert.128.9.2000}, \cite%
{Koumandos.77.2008}, \cite{Mortici.23.1.2010}, \cite{Mortici.32.4.2011}, 
\cite{Qi.39.2.2010}, \cite{Qiu.74.250.2005}, \cite{Vogt.3.5.2002.art.73},
and closely-related references therein.

In particular, we mention the following inequalities proved by Batir \cite[%
Lemma 1.7]{Batir.12.1.2009.art.9}%
\begin{equation}
\ln \left( x+\tfrac{1}{2}\right) <\psi \left( x+1\right) \leq \ln \left(
x+e^{-\gamma }\right)  \label{1.1a}
\end{equation}%
for $x>0$, where $1/2$ and $e^{-\gamma }$ are the best possible constants, $%
\gamma =0.577215664\cdot \cdot \cdot $ is the Euler-Mascheroni constant. Let 
$H_{n}$ denote the harmonic number defined by%
\begin{equation}
H_{n}=\sum_{k=1}^{n}\frac{1}{k},\text{ \ \ }\left( n\in \mathbb{N}\right) .
\label{Harmonic}
\end{equation}%
From (\ref{1.1a}) and the relation $H_{n}=\gamma +\psi \left( n+1\right) $
(see \cite[p.258]{Abramowttz.1972}) it follows that 
\begin{equation}
\gamma +\ln \left( n+\tfrac{1}{2}\right) <H_{n}\leq \gamma +\ln \left(
n+e^{1-\gamma }-1\right)  \label{1.1b}
\end{equation}%
hold for $n\in \mathbb{N}$. In 2011, He showed further in \cite[Corollary 2.2%
]{Batir.2011.inprint} that 
\begin{equation}
\frac{1}{2}\ln \left( x^{2}+x+e^{-2\gamma }\right) \leq \psi \left(
x+1\right) <\frac{1}{2}\ln \left( x^{2}+x+\tfrac{1}{3}\right)  \label{1.2a}
\end{equation}%
is valid for all $x>0$, where $e^{-2\gamma }$ and $1/3$ are the best
possible. As a direct consequence, he obtained for $n\in \mathbb{N}$. 
\begin{equation}
\gamma +\tfrac{1}{2}\ln \left( n^{2}+n+e^{2-2\gamma }-2\right) \leq
H_{n}<\gamma +\tfrac{1}{2}\ln \left( n^{2}+n+\tfrac{1}{3}\right) .
\label{1.2b}
\end{equation}%
Furthermore, for $n\in \mathbb{N}$, the sequence $\left( \sigma _{n}\right) $
defined by 
\begin{equation}
\sigma _{n}=H_{n}-\tfrac{1}{2}\ln \left( n^{2}+n+\tfrac{1}{3}\right)
\label{Batir seq.}
\end{equation}%
is strictly increasing and 
\begin{equation*}
\lim_{n\rightarrow \infty }n^{4}\left( \sigma _{n}-\gamma \right) =-\tfrac{1%
}{180},
\end{equation*}%
which means that the sequence $\left( \sigma _{n}\right) $ converges to $%
\gamma $ like $n^{-4}$.

Motived by the above results, we easily construct a real function $\left(
x,a\right) \rightarrow \mathcal{L}\left( x,a\right) $ defined on $\left(
-1,\infty \right) \times \left( 4/15,\infty \right) $ or $\left( 0,\infty
\right) \times \left( 1/15,\infty \right) $ by the formula 
\begin{equation}
\mathcal{L}(x,a)=\tfrac{1}{90a^{2}+2}\ln \left( x^{2}+x+\tfrac{3a+1}{3}%
\right) +\tfrac{45a^{2}}{90a^{2}+2}\ln \left( x^{2}+x+\tfrac{15a-1}{45a}%
\right) .  \label{L(x,a)}
\end{equation}%
By arithmetic-geometric mean inequality we have%
\begin{eqnarray*}
\mathcal{L}(x,a) &=&\tfrac{1}{2}\ln \left( \left( x^{2}+x+\tfrac{3a+1}{3}%
\right) ^{1/\left( 45a^{2}+1\right) }\left( x^{2}+x+\tfrac{15a-1}{45a}%
\right) ^{45a^{2}/\left( 45a^{2}+1\right) }\right) \\
&\leq &\tfrac{1}{2}\ln \left( \tfrac{1}{45a^{2}+1}\left( x^{2}+x+\tfrac{3a+1%
}{3}\right) +\tfrac{45a^{2}}{45a^{2}+1}\left( x^{2}+x++\tfrac{15a-1}{45a}%
\right) \right) \\
&=&\tfrac{1}{2}\ln \left( x^{2}+x+\tfrac{1}{3}\right) .
\end{eqnarray*}%
From this, if the inequality $\psi \left( x+1\right) \leq \mathcal{L}\left(
x,a\right) $ is valid for $x\in \left( -1,\infty \right) $, then the second
inequality of (\ref{1.2a}) may be refined.

It is clear that 
\begin{equation}
\lim_{x\rightarrow \infty }\left( \psi \left( x+1\right) -\mathcal{L}%
(x,a)\right) =0,  \label{1.4}
\end{equation}%
and employing L'Hospital's rule together with 
\begin{equation}
\psi ^{\prime }\left( x\right) \symbol{126}\frac{1}{x}+\frac{1}{2x^{2}}+%
\frac{1}{6x^{3}}-\frac{1}{30x^{5}}+\frac{1}{42x^{7}}-\frac{1}{30x^{9}}+...%
\text{ (as }x\rightarrow \infty \text{)}  \label{tri-gamma-Exp}
\end{equation}%
(see \cite[pp. 258--260.]{Abramowttz.1972}), we easily get the following
limit relations:

\begin{eqnarray}
\lim_{x\rightarrow \infty }\frac{\psi \left( x+1\right) -\mathcal{L}(x,a)}{%
x^{-6}} &=&-\tfrac{\left( a-\frac{40+3\sqrt{205}}{105}\right) \left( a-\frac{%
40-3\sqrt{205}}{105}\right) }{85050a},  \label{1.3b} \\
\lim_{x\rightarrow \infty }\frac{\psi \left( x+1\right) -\mathcal{L}(x,a_{1})%
}{x^{-8}} &=&-\tfrac{2}{1225},  \label{1.3c}
\end{eqnarray}%
where $a_{1}=\left( 40+3\sqrt{205}\right) /105$. (\ref{1.3b}) and (\ref{1.3c}%
) indicates that $\psi \left( x+1\right) -\mathcal{L}\left( x,a\right) $ and 
$\psi \left( x+1\right) -\mathcal{L}\left( x,a_{1}\right) $ converge to zero
as $x$ tends infinite like $x^{-6}$ and $x^{-8}$, respectively.

The first aim of this paper is to determine the best $a$ such that the
function $F_{a}\left( x\right) =\psi \left( x+1\right) -\mathcal{L}(x,a)$
has monotonicity and convexity properties, which are showed in Section 3.
The second aim is to determine the best $a$ such that the inequality%
\begin{equation}
\psi \left( x+1\right) <\mathcal{L}(x,a)  \label{m}
\end{equation}%
holds for $x\in \left( -1,\infty \right) $ and its reverse holds for $x\in
(0,\infty )$, which yield some new sharp bounds for harmonic numbers $H_{n}$%
, and they are presented in Section 4. In Section 5, as an application, we
construct a sequence $\left( l_{n}\left( a\right) \right) $ defined by $%
l_{n}\left( a\right) =H_{n}-\mathcal{L}\left( n,a\right) $, which gives
extremely accurate values for $\gamma $ and greatly improves some known
results. Lastly, an open problem is posted.

Some complicated algebraic computations are preformed with the aid of
built-in computer algebra system of \emph{Scientific Workplace Version 5.5}.

\section{\textsc{Lemmas}}

\begin{lemma}
\label{Lemma prop. L}Let $\mathcal{L}(x,a)$ be defined by the formula (\ref%
{L(x,a)}).

(i) If $x>-1$ then the function $\mathcal{L}$ is increasing in $a$ on $%
\left( 4/15,\infty \right) $, and 
\begin{equation}
\lim_{a\rightarrow \infty }\mathcal{L}(x,a)=\tfrac{1}{2}\ln \left( x^{2}+x+%
\tfrac{1}{3}\right) .  \label{2.0}
\end{equation}

(ii) If $x>0$ then $a\mapsto \partial \mathcal{L}/\partial x$ is decreasing, 
$a\mapsto \partial ^{2}\mathcal{L}/\partial x^{2}$ is increasing and $%
a\mapsto \partial ^{3}\mathcal{L}/\partial x^{3}$ is decreasing on $\left(
1/15,\infty \right) $.
\end{lemma}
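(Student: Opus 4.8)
The plan is to derive a single closed form for $\partial\mathcal{L}/\partial a$ and read off all four assertions from it. First I would set $t=x^{2}+x+\tfrac13$, $P=t+a$ and $Q=t-\tfrac{1}{45a}$, so that $\mathcal{L}(x,a)=\tfrac{1}{90a^{2}+2}\ln P+\tfrac{45a^{2}}{90a^{2}+2}\ln Q$, the two coefficients summing to $\tfrac12$. On $\left(-1,\infty\right)\times\left(4/15,\infty\right)$ one has $t>\tfrac1{12}$ and $\tfrac1{45a}<\tfrac1{12}$, and on $\left(0,\infty\right)\times\left(1/15,\infty\right)$ one has $t>\tfrac13$ and $\tfrac1{45a}<\tfrac13$; in either case $P>Q>0$, which is precisely what makes the logarithms well defined and the mean inequality below applicable. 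Differentiating in $a$ and using the two identities $P-Q=\tfrac{45a^{2}+1}{45a}$ and $\partial P/\partial x=\partial Q/\partial x=2x+1$ (the latter because $P,Q$ differ by a constant in $x$), a short computation gives
\[
\frac{\partial \mathcal{L}}{\partial a}=\frac{1}{2\left(45a^{2}+1\right)}\left(\frac{1}{P}+\frac{1}{Q}-\frac{2}{P-Q}\ln\frac{P}{Q}\right).
\]

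For part (i) I would note that the bracket is positive exactly when $\tfrac{P-Q}{2}\left(\tfrac1P+\tfrac1Q\right)>\ln\tfrac PQ$; setting $r=P/Q>1$ this reads $\tfrac12\left(r-1/r\right)>\ln r$, which holds because $\tfrac{d}{dr}\left[\tfrac12\left(r-1/r\right)-\ln r\right]=\tfrac{\left(r-1\right)^{2}}{2r^{2}}\ge 0$ — equivalently, the logarithmic mean of $P,Q$ exceeds their harmonic mean. Hence $\partial\mathcal{L}/\partial a>0$ and $\mathcal{L}$ is increasing in $a$. The limit (\ref{2.0}) is then immediate: as $a\to\infty$ the term $\tfrac{45a^{2}}{90a^{2}+2}\ln Q\to\tfrac12\ln t$, while $\tfrac{1}{90a^{2}+2}\ln P=\tfrac{\ln\left(t+a\right)}{90a^{2}+2}\to 0$.

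For part (ii) I would differentiate the displayed formula for $\partial\mathcal{L}/\partial a$ in $x$. Because $P-Q$ is constant in $x$ and $\partial_{x}P=\partial_{x}Q=2x+1$, the $x$-derivative of $\tfrac{2}{P-Q}\ln\tfrac PQ$ collapses to $-\tfrac{2\left(2x+1\right)}{PQ}$ and the logarithm disappears, leaving
\[
\frac{\partial^{2}\mathcal{L}}{\partial a\,\partial x}=-\frac{2x+1}{2\left(45a^{2}+1\right)}\left(\frac{1}{P}-\frac{1}{Q}\right)^{2}<0\qquad(x>0),
\]
which is the first monotonicity claim, manifestly a negative multiple of a square. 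Writing $\tfrac1P-\tfrac1Q=-\tfrac{P-Q}{PQ}$ turns this into $-K\left(2x+1\right)/(PQ)^{2}$ with $K=\tfrac{45a^{2}+1}{4050a^{2}}>0$ constant in $x$, so the two remaining mixed partials are $-K$ times $x$-derivatives of the rational function $\left(2x+1\right)/(PQ)^{2}$.

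The computation then rests on the polynomial identities $\left(2x+1\right)^{2}=4t-\tfrac13$, $P+Q=2t+a-\tfrac1{45a}$ and $PQ=t^{2}+\left(a-\tfrac1{45a}\right)t-\tfrac1{45}$ (the constant term $-\tfrac1{45}$ being independent of $a$): after clearing the positive factor $K$ and the positive power of $PQ$, the sign of $\partial^{3}\mathcal{L}/\partial a\,\partial x^{2}$ reduces to the positivity of a quadratic in $t$ on $t>\tfrac13$, and that of $\partial^{4}\mathcal{L}/\partial a\,\partial x^{3}$ to a cubic. This uniform sign control over the two-dimensional region is where I expect the real work. I would dispatch the quadratic $7t^{2}+\left(3c-\tfrac23\right)t+\left(\tfrac1{45}-\tfrac c3\right)$, $c=a-\tfrac1{45a}$, by checking that its vertex lies left of $t=\tfrac13$ (using $c\ge-\tfrac4{15}$ on the domain) and that its value at $t=\tfrac13$, namely $\tfrac{26}{45}+\tfrac{2c}{3}$, is increasing in $a$ with minimum $\tfrac25$ at $a=\tfrac1{15}$, so the upward parabola stays positive for $t>\tfrac13$. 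The cubic would be handled by the same boundary-plus-monotonicity scheme applied to it and to its derivative, reducing verification to the single endpoint $a=\tfrac1{15}$; the bulky expansions at this last stage are exactly what the computer algebra system is used for.
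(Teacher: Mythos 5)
Your proposal is correct, and it takes a genuinely different route from the paper's. For part (i), the paper never writes $\partial\mathcal{L}/\partial a$ in your closed form; it computes the mixed partial $\partial^{2}\mathcal{L}/\partial x\partial a$, reads off that its sign is that of $-(2x+1)$, and then argues via monotonicity in $x$ and limits --- $x\to\infty$ on $[-1/2,\infty)$, $x\to-1^{+}$ on $(-1,-1/2)$ --- the second case requiring an auxiliary function $\mathcal{L}_{1}(a)$ whose positivity is proved by a further monotonicity argument. Your identity $\partial\mathcal{L}/\partial a=\tfrac{1}{2(45a^{2}+1)}\bigl(\tfrac1P+\tfrac1Q-\tfrac{2}{P-Q}\ln\tfrac PQ\bigr)$ combined with the harmonic-versus-logarithmic mean inequality gives positivity pointwise, with no case split and no limiting argument; this is cleaner and strictly more informative. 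For part (ii), the paper computes the three mixed partials as explicit rational functions of $x$ (with quartic and sextic numerators $P(x)$, $Q(x)$) and checks that every polynomial coefficient is positive for $a>1/15$, whereas you exhibit $\partial^{2}\mathcal{L}/\partial a\partial x$ as a negative multiple of a square and reduce the two remaining signs, via the substitution $t=x^{2}+x+\tfrac13$, to a quadratic $\phi(t)$ and a cubic in $t$ --- a lower-degree, more structured verification. Your quadratic analysis is complete and correct; the cubic you only sketch, but your scheme does go through: the cubic is $C(t)=3(2t+c)\,\phi(t)-PQ\,\phi'(t)$, and one computes $C(1/3)=c^{2}+\tfrac{22}{15}c+\tfrac45$, which has negative discriminant and is therefore positive for every $c$, while $\partial C'(t)/\partial c=44t+12c-\tfrac{10}{3}>0$ for $t>1/3$, $c\geq-4/15$, so that $C'(t)\geq C'(t)\big|_{c=-4/15}=84t^{2}-\tfrac{92}{5}t+\tfrac{44}{25}>0$ (negative discriminant again); hence $C$ increases on $(1/3,\infty)$ and stays positive, exactly as you predicted. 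Two trivial slips that affect nothing: at $x=-1/2$ one has $t=1/12$ exactly (not $t>1/12$), though $Q>0$ still holds since $1/(45a)<1/12$ for $a>4/15$; and the minimum value $\tfrac25$ of $\phi(1/3)$ is an infimum over the open interval $a>1/15$, attained only in the limit.
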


\begin{proof}
(i) Direct partial derivative calculations yield%
\begin{eqnarray}
\frac{\partial \mathcal{L}}{\partial a} &=&\tfrac{45a}{\left(
45a^{2}+1\right) ^{2}}\ln \tfrac{x^{2}+x+\left( 15a-1\right) /\left(
45a\right) }{x^{2}+x+\left( 3a+1\right) /3}  \notag \\
&&+\tfrac{1}{\left( 90a^{2}+2\right) }\tfrac{1}{x^{2}+x+\left( 3a+1\right) /3%
}+\tfrac{1}{90a^{2}+2}\tfrac{1}{x^{2}+x+\left( 15a-1\right) /\left(
45a\right) },  \notag \\
\frac{\partial ^{2}\mathcal{L}}{\partial x\partial a} &=&-\tfrac{45a^{2}+1}{%
4050a^{2}}\tfrac{2x+1}{\left( x^{2}+x+\left( 3a+1\right) /3\right)
^{2}\left( x^{2}+x+\left( 15a-1\right) /\left( 45a\right) \right) ^{2}},
\label{L_ax}
\end{eqnarray}

Clearly, if $x\geq -1/2$, then $\partial ^{2}\mathcal{L}/\partial x\partial
a<0$, which means that $\partial \mathcal{L}/\partial a$ decreases with $x$.
This leads to%
\begin{equation*}
\frac{\partial \mathcal{L}(x,a)}{\partial a}>\lim_{x\rightarrow \infty }%
\frac{\partial \mathcal{L}(x,a)}{\partial a}=0,
\end{equation*}%
that is, $\mathcal{L}\left( x,a\right) $ increases with $a$ on $\left(
4/15,\infty \right) $.

If $-1<x<-1/2$, then $\partial ^{2}\mathcal{L}/\partial x\partial a>0$, that
is, $\partial \mathcal{L}/\partial a$ increases with $x$ on $\left(
-1,-1/2\right) $. Hence, 
\begin{eqnarray*}
\frac{\partial \mathcal{L}(x,a)}{\partial a} &>&\lim_{x\rightarrow -1+}\frac{%
\partial \mathcal{L}(x,a)}{\partial a} \\
&=&\tfrac{45a}{\left( 45a^{2}+1\right) ^{2}}\left( \tfrac{45a^{2}+1}{90a}%
\tfrac{135a^{2}+90a-3}{45a^{2}+12a-1}+\ln \left( \tfrac{\left( 15a-1\right) 
}{15a\left( 3a+1\right) }\right) \right) \\
&:&=\tfrac{45a}{\left( 45a^{2}+1\right) ^{2}}\mathcal{L}_{1}\left( a\right) .
\end{eqnarray*}%
An elementary computation gives 
\begin{equation*}
\mathcal{L}_{1}^{\prime }\left( a\right) =\tfrac{\left( 45a^{2}+1\right) ^{2}%
}{30a^{2}\left( 45a^{2}+12a-1\right) ^{2}}\left( a+\tfrac{\sqrt{6}-1}{15}%
\right) \left( a-\tfrac{\sqrt{6}+1}{15}\right) >0
\end{equation*}%
for $a\in \left( 4/15,\infty \right) $, which indicates that $\mathcal{L}%
_{1}\left( a\right) $ increases with $a$. Consequently, 
\begin{eqnarray*}
\mathcal{L}_{1}\left( a\right) &>&\mathcal{L}_{1}\left( \tfrac{4}{15}\right)
=\left[ \tfrac{45a^{2}+1}{90a}\tfrac{135a^{2}+90a-3}{45a^{2}+12a-1}+\ln
\left( \tfrac{\left( 15a-1\right) }{15a\left( 3a+1\right) }\right) \right]
_{a=4/15} \\
&=&\ln \tfrac{5}{12}+\tfrac{119}{120}>0,
\end{eqnarray*}%
it follows that 
\begin{equation*}
\frac{\partial \mathcal{L}}{\partial a}\left( x,a\right) >\tfrac{45a}{\left(
45a^{2}+1\right) ^{2}}\mathcal{L}_{1}\left( a\right) >0.
\end{equation*}%
Thus, we have $\partial \mathcal{L}/\partial a>0$ for $x\in \left( -1,\infty
\right) $ and $a\in \left( 4/15,\infty \right) $.

(ii) By (\ref{L_ax}) it is clear that $\partial ^{2}\mathcal{L}/\partial
x\partial a=\partial ^{2}\mathcal{L}/\partial a\partial x<0$ if $x>0$, and
so $\partial \mathcal{L}/\partial x$ decreases with $a$.

Partial derivative computations once again give 
\begin{eqnarray}
\mathcal{L}_{x} &=&\tfrac{\partial \mathcal{L}}{\partial x}=\tfrac{1}{%
90a^{2}+2}\tfrac{2x+1}{x^{2}+x+a+1/3}+\tfrac{45a^{2}}{90a^{2}+2}\tfrac{2x+1}{%
x^{2}+x+\left( 15a-1\right) /\left( 45a\right) },  \label{L_x} \\
\mathcal{L}_{xx} &=&\tfrac{\partial ^{2}\mathcal{L}}{\partial x^{2}}=-\tfrac{%
1}{45a^{2}+1}\tfrac{x^{2}+x-a+1/6}{\left( x^{2}+x+a+1/3\right) ^{2}}-\tfrac{%
45a^{2}}{45a^{2}+1}\tfrac{x^{2}+x+1/\left( 45a\right) +1/6}{\left(
x^{2}+x+\left( 15a-1\right) /\left( 45a\right) \right) ^{2}},  \label{L_xx}
\\
\tfrac{\partial ^{3}\mathcal{L}}{\partial x^{2}\partial a} &=&\tfrac{%
45a^{2}+1}{3^{7}5^{3}a^{3}}\tfrac{P(x)}{\left( x^{2}+x+\left( 3a+1\right)
/3\right) ^{3}\left( x^{2}+x+\left( 15a-1\right) /\left( 45a\right) \right)
^{3}},  \notag \\
\mathcal{L}_{xxx} &=&\tfrac{\partial ^{3}\mathcal{L}}{\partial x^{3}}=\tfrac{%
1}{45a^{2}+1}\tfrac{\left( 2x+1\right) \left( x^{2}+x-3a\right) }{\left(
x^{2}+x+a+1/3\right) ^{3}}+\tfrac{45a^{2}}{45a^{2}+1}\tfrac{\left(
2x+1\right) \left( x^{2}+x+1/\left( 15a\right) \right) }{\left(
x^{2}+x+\left( 15a-1\right) /\left( 45a\right) \right) ^{3}},  \label{L_xxx}
\\
\tfrac{\partial ^{4}\mathcal{L}}{\partial x^{3}\partial a} &=&-\tfrac{%
45a^{2}+1}{3^{8}5^{4}}\tfrac{\left( 2x+1\right) \times Q(x)}{\left(
x^{2}+x+a+1/3\right) ^{4}\left( x^{2}+x+\left( 15a-1\right) /\left(
45a\right) \right) ^{4}},  \notag
\end{eqnarray}%
where%
\begin{eqnarray*}
P(x) &=&945ax^{4}+1890ax^{3}+\left( 405a^{2}+1485a-9\right) x^{2} \\
&&+\left( 405a^{2}+540a-9\right) x+\left( 90a^{2}+78a-2\right) ,
\end{eqnarray*}%
\begin{equation*}
Q(x)=\left( 56\,700a^{2}\right) x^{6}+\left( 170\,100a^{2}\right)
x^{5}+b_{4}x^{4}+b_{3}x^{3}+b_{2}x^{2}+b_{1}x+b_{0},
\end{equation*}%
here%
\begin{eqnarray*}
b_{4} &=&\left( 44\,550a^{3}+220\,050a^{2}-990a\right) , \\
b_{3} &=&89\,100a^{3}+156\,600a^{2}-1980a, \\
b_{2} &=&\left( 12\,150a^{4}+67\,500a^{3}+64\,710a^{2}-1500a+6\right) , \\
b_{1} &=&\left( 12\,150a^{4}+22\,950a^{3}+14\,760a^{2}-510a+6\right) , \\
b_{1} &=&\left( 2025a^{4}+2970a^{3}+1530a^{2}-66a+1\right) .
\end{eqnarray*}%
It is easy to verify that all coefficients of $P(x)$ and $Q\left( x\right) $
are positive for $a\in \left( 1/15,\infty \right) $, which leads to $%
\partial ^{3}\mathcal{L}/\partial x^{2}\partial a>0$, $\partial ^{4}\mathcal{%
L}/\partial x^{3}\partial a<0$ for $x>0$, which proves the desired results.
\end{proof}

\begin{remark}
\label{Remark D}By the second assertion in the above lemma, and making use
of mean-value theorem, we see that the following functions%
\begin{eqnarray}
a &\mapsto &\mathcal{L}(x,a)-\mathcal{L}\left( y,a\right) ,  \label{c_y} \\
a &\mapsto &\mathcal{L}_{x}(x,a)-\mathcal{L}_{x}\left( y,a\right) ,  \notag
\\
a &\mapsto &\mathcal{L}_{xx}(x,a)-\mathcal{L}_{xx}\left( y,a\right)  \notag
\end{eqnarray}%
are decreasing, increasing and decreasing on $\left( 1/15,\infty \right) $
for $x>y>0$.
\end{remark}

\begin{lemma}[{\protect\cite[pp. 258--260.]{Abramowttz.1972}}]
\label{Lemma psi(n)}Let $x>0$ and $n\in \mathbb{N}$. Then%
\begin{equation}
\psi ^{\left( n\right) }(x+1)-\psi ^{\left( n\right) }(x)=\frac{\left(
-1\right) ^{n}n!}{x^{n+1}},  \label{2.2}
\end{equation}
\end{lemma}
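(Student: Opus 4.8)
The plan is to reduce everything to the fundamental functional equation for the gamma function and then differentiate. First I would start from the recurrence $\Gamma(x+1)=x\Gamma(x)$, which holds for all $x>0$ and follows directly from the integral representation (\ref{Gamma}) by integration by parts. Taking logarithms gives
\begin{equation*}
\ln \Gamma (x+1)=\ln x+\ln \Gamma (x),
\end{equation*}
and differentiating this identity once, using $\psi =\left( \ln \Gamma \right) ^{\prime }=\Gamma ^{\prime }/\Gamma $, yields the basic digamma recurrence
\begin{equation*}
\psi (x+1)=\psi (x)+\tfrac{1}{x}.
\end{equation*}
This is precisely (\ref{2.2}) in the case $n=0$, since $\left( -1\right) ^{0}0!/x^{0+1}=1/x$.

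Next I would differentiate the digamma recurrence $n$ further times. Because $\psi $ is infinitely differentiable on $\left( 0,\infty \right) $, this is legitimate, and the left-hand side produces exactly $\psi ^{\left( n\right) }(x+1)-\psi ^{\left( n\right) }(x)$. The right-hand side is then the $n$-th derivative of $1/x$, so it remains only to evaluate $d^{n}\left( x^{-1}\right) /dx^{n}$. A one-line induction on $n$ establishes that this equals $\left( -1\right) ^{n}n!\,x^{-n-1}$: the base case $n=0$ is trivial, and differentiating $\left( -1\right) ^{n}n!\,x^{-n-1}$ once gives $\left( -1\right) ^{n}n!\left( -\left( n+1\right) \right) x^{-n-2}=\left( -1\right) ^{n+1}\left( n+1\right) !\,x^{-n-2}$, which advances the index. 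Substituting this back yields the claimed formula
\begin{equation*}
\psi ^{\left( n\right) }(x+1)-\psi ^{\left( n\right) }(x)=\frac{\left( -1\right) ^{n}n!}{x^{n+1}}.
\end{equation*}

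There is no genuine obstacle here: the entire argument is routine once the base recurrence $\psi (x+1)=\psi (x)+1/x$ is in hand. The only points requiring any care are the justification that $\Gamma $ (hence $\psi $) may be differentiated as many times as needed on $\left( 0,\infty \right) $, which is standard for the integral (\ref{Gamma}) by differentiation under the integral sign, and the elementary bookkeeping in computing the $n$-th derivative of $x^{-1}$. Since the statement is quoted from \cite{Abramowttz.1972}, one could alternatively just cite it, but the self-contained derivation above is short enough to record directly.
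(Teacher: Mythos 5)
Your proof is correct. Note that the paper does not prove this lemma at all: it is quoted directly from Abramowitz--Stegun, so there is no internal argument to compare against. Your self-contained derivation is the standard one and is sound in every step: the recurrence $\Gamma(x+1)=x\Gamma(x)$ follows from integration by parts in (\ref{Gamma}) for $x>0$, taking logarithms and differentiating gives $\psi(x+1)=\psi(x)+1/x$, and differentiating $n$ more times reduces the claim to the identity $\frac{d^{n}}{dx^{n}}\left( x^{-1}\right) =\left( -1\right)^{n}n!\,x^{-n-1}$, which your one-line induction establishes. The only hypotheses needing care --- smoothness of $\psi$ on $\left( 0,\infty\right)$ and differentiation under the integral sign --- are exactly the ones you flag, and both are standard. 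As a small bonus, your argument covers $n=0$ as well as $n\geq 1$, so it is if anything slightly more general than the statement requires.
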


The following lemma was first used to establish some monotonicity results
for the gamma function \cite{Elbert.128.9.2000}, which also play an
important role in proofs our main results.

\begin{lemma}[\protect\cite{Elbert.128.9.2000}]
\label{Lemma Dfx}Let $f$ be a function defined on an interval $I$ and $%
\lim_{x\rightarrow \infty }f(x)=0$. If $f(x+1)-f(x)>0$ for all $x\in I$,
then $f(x)<0$. If $f(x+1)-f(x)<0$ for all $x\in I$, then $f(x)>0$.
\end{lemma}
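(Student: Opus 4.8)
The plan is to use a telescoping argument together with the hypothesis that $f$ vanishes at infinity. Implicit in the statement is that $I$ is unbounded above and closed under the shift $x\mapsto x+1$ (as holds for $I=(-1,\infty)$ and $I=(0,\infty)$ in our applications), so that for each $x\in I$ the whole sequence $x,x+1,x+2,\dots$ lies in $I$ and every difference $f(x+k+1)-f(x+k)$ is defined. I would record this tacit structural assumption at the outset.

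I would first handle the case $f(x+1)-f(x)>0$ on $I$. Fixing $x\in I$ and an integer $n\ge 1$, I would write the telescoping identity
\begin{equation*}
f(x+n)-f(x)=\sum_{k=0}^{n-1}\bigl(f(x+k+1)-f(x+k)\bigr).
\end{equation*}
Since $x+k\in I$ for every $k\ge 0$, each summand is positive by hypothesis, so $f(x+n)>f(x)$ for all $n\ge 1$. Letting $n\to\infty$ and invoking $\lim_{x\to\infty}f(x)=0$ yields $0=\lim_{n\to\infty}f(x+n)\ge f(x)$, that is, $f(x)\le 0$ throughout $I$.

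To obtain the strict inequality asserted, I would apply this weak bound at the point $x+1$, giving $f(x+1)\le 0$, and combine it with the $n=1$ case $f(x)<f(x+1)$ to conclude $f(x)<f(x+1)\le 0$. The second assertion then follows by applying the first to $g:=-f$, which satisfies $g(x+1)-g(x)>0$ and $\lim_{x\to\infty}g(x)=0$, whence $g(x)<0$ and so $f(x)>0$. I anticipate no real obstacle: the argument is elementary, and the only delicate point is the passage from the weak inequality $f(x)\le 0$ to the strict one $f(x)<0$, which is precisely why I keep the $n=1$ term separate rather than relying on the limit alone (a strict term-by-term inequality can degrade to a weak one in the limit).
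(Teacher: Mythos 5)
The paper does not actually prove this lemma: it imports the statement from the cited reference \cite{Elbert.128.9.2000}, so there is no internal proof to compare against, and your telescoping argument is precisely the standard one used there. Your proof is correct — including the two delicate points you handle explicitly: the tacit assumption that $I$ is unbounded above and closed under $x\mapsto x+1$ (which holds for the intervals $(-1,\infty)$ and $(0,\infty)$ where the paper applies the lemma), and the passage from the weak limit inequality to the strict conclusion via $f(x)<f(x+1)\le 0$ rather than relying on the limit alone.
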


\section{Monotonicity and convexity}

\begin{theorem}
\label{MT F_a1}Let the function $x\rightarrow F_{a}\left( x\right) =\psi
\left( x+1\right) -\mathcal{L}(x,a)\ $be defined on $\left( -1,\infty
\right) $ where $\mathcal{L}(x,a)$ be defined by \ref{L(x,a)}. Then for $%
x>-1 $, we have%
\begin{equation*}
\left( -1\right) ^{n}F_{a_{1}}^{\left( n\right) }\left( x\right) <0,\text{ \
\ \ }n=1,2,3,
\end{equation*}%
where $a_{1}=\left( 40+3\sqrt{205}\right) /105\approx 0.79003$.
\end{theorem}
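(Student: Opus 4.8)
The plan is to apply the Elbert-type criterion of Lemma \ref{Lemma Dfx} separately to each of the three functions $f_n(x):=(-1)^nF_{a_1}^{(n)}(x)$, $n=1,2,3$; the desired conclusion $(-1)^nF_{a_1}^{(n)}(x)<0$ is exactly $f_n(x)<0$. For this I first need the boundary behaviour $\lim_{x\to\infty}f_n(x)=0$. This is immediate: from the asymptotic expansion (\ref{tri-gamma-Exp}) one has $\psi^{(n)}(x+1)\to0$ for $n\ge1$, while the explicit forms (\ref{L_x})--(\ref{L_xxx}) show that each $\partial^n\mathcal{L}/\partial x^n\to0$ as $x\to\infty$; hence $F_{a_1}^{(n)}(x)\to0$. (More strongly, by (\ref{1.3c}) the function $F_{a_1}$ itself decays like $x^{-8}$, so all of its derivatives vanish at infinity.)

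Next I reduce the monotonicity hypothesis of Lemma \ref{Lemma Dfx} to an explicit, purely rational inequality. Writing $\mathcal{L}^{(n)}$ for $\partial^n\mathcal{L}/\partial x^n$ and using $F_{a_1}^{(n)}(x)=\psi^{(n)}(x+1)-\mathcal{L}^{(n)}(x,a_1)$ together with Lemma \ref{Lemma psi(n)} applied at the argument $x+1$, namely $\psi^{(n)}(x+2)-\psi^{(n)}(x+1)=(-1)^nn!/(x+1)^{n+1}$, I obtain
\begin{equation*}
f_n(x+1)-f_n(x)=\frac{n!}{(x+1)^{n+1}}-(-1)^n\Big(\mathcal{L}^{(n)}(x+1,a_1)-\mathcal{L}^{(n)}(x,a_1)\Big)=:\Delta_n(x).
\end{equation*}
Since the psi function has dropped out, $\Delta_n$ is a rational function of $x$: the quantities $\mathcal{L}_x,\mathcal{L}_{xx},\mathcal{L}_{xxx}$ from (\ref{L_x})--(\ref{L_xxx}) are rational, with denominators that are positive powers of the quadratics $x^2+x+a_1+1/3$ and $x^2+x+(15a_1-1)/(45a_1)$ (both of negative discriminant, hence strictly positive on $\mathbb{R}$) and of $(x+1)^{n+1}$ (positive for $x>-1$). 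Thus the goal becomes: show $\Delta_n(x)>0$ on $(-1,\infty)$ for $n=1,2,3$, whence Lemma \ref{Lemma Dfx} yields $f_n<0$ and the theorem follows.

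To prove $\Delta_n>0$ I would clear the (strictly positive) denominators and reduce each case to the positivity of a single polynomial $N_n(x)$ on $(-1,\infty)$. Here the special role of $a_1$ enters through the defining relation $315a_1^2-240a_1-7=0$ (equivalently $a_1,a_2=(40\pm3\sqrt{205})/105$ are the roots of $a^2-\tfrac{16}{21}a-\tfrac1{45}$), which can be used repeatedly to eliminate $\sqrt{205}$ and to lower the degree of the coefficients; this is exactly the algebraic simplification that forces the $x^{-6}$ term in (\ref{1.3b}) to vanish. A convenient device is the substitution $x=t-\tfrac12$, under which $x^2+x=t^2-\tfrac14$ so that the quadratic factors become even in $t$ and the shift $x\mapsto x+1$ becomes $t\mapsto t+1$; this keeps the coefficient bookkeeping manageable. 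The positivity of $N_n$ on $(-1,\infty)$ is then established by exhibiting its coefficients (after the $a_1$-reductions) as positive, or, where a coefficient is not manifestly of one sign, by grouping terms.

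The main obstacle is precisely this last step: the difference $\mathcal{L}^{(n)}(x+1,a_1)-\mathcal{L}^{(n)}(x,a_1)$ carries the two quadratic factors evaluated at both $x$ and $x+1$ to powers up to $n+1$, so after combining with the $(x+1)^{n+1}$ term the polynomials $N_n$ are of substantial degree (growing with $n$), and their coefficients are algebraic numbers in $a_1$. Verifying that each $N_n$ is positive on the whole half-line---rather than merely checking the sign of $\Delta_n$ numerically---is the genuine computational content, and it is here that the symbolic reductions via $315a_1^2=240a_1+7$ together with the coefficient positivity must be carried out carefully (the source of the ``complicated algebraic computations'' mentioned in the introduction). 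Once the three polynomial positivities are in hand, the three applications of Lemma \ref{Lemma Dfx} complete the proof with no further analysis.
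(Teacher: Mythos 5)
Your skeleton is sound and uses the same core tools as the paper: the vanishing limits at infinity, the telescoping identity for $f_n(x+1)-f_n(x)$ via Lemma \ref{Lemma psi(n)}, Lemma \ref{Lemma Dfx}, and the algebraic role of $a_1$ as a root of $315a^{2}-240a-7=0$ are all correctly identified, and three applications of Lemma \ref{Lemma Dfx} would indeed prove the theorem \emph{if} the three positivity claims were established. The genuine gap is that none of them is: you describe how one might try to prove $N_n>0$, but prove none, and that is where the entire content of the theorem sits. Concretely, after clearing denominators your plan needs: (i) for $n=1$, that the numerator of $\Delta_1$ is positive --- in fact, once $315a_1^{2}=240a_1+7$ is used, this numerator collapses to the \emph{constant} $\tfrac{144}{1225}$ (the paper's $q(x,a_1)=-\tfrac{144}{1225}$); this collapse is the miracle that makes the whole proof tractable, and your write-up gestures at the mechanism without ever extracting it; (ii) for $n=2$, what amounts to $p_x(x,a_1)>0$ on $(-1,\infty)$, where $p(x,a_1)$ is the degree-$10$ denominator in (\ref{3.3}) --- this is true but genuinely not obvious, since the factors of the form $x^{2}+x+c$ are decreasing on $(-1,-1/2)$, so it costs its own polynomial analysis; (iii) for $n=3$, the positivity of a degree-$16$ polynomial (the paper's $w$), which the paper proves by substituting $t=(x+1)^{2}$ and splitting into the cases $t\geq 1/8$ and $0<t<1/8$. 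None of these verifications appears in your proposal, so nothing is actually proved.

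Beyond the gap, your architecture is heavier than necessary, and it is worth seeing why the paper needs only \emph{one} polynomial verification instead of your three. Because $q(x,a_1)$ is constant, the paper has the closed form $F_{a_1}'(x+1)-F_{a_1}'(x)=-\tfrac{144}{1225}\,p(x,a_1)^{-1}$; differentiating this once and twice produces the differences of $F_{a_1}''$ and $F_{a_1}'''$ with no fresh computation of $\mathcal{L}_{xx}$ and $\mathcal{L}_{xxx}$ at shifted arguments. Lemma \ref{Lemma Dfx} is then applied only to $F_{a_1}'''$ (requiring $w>0$), and the cases $n=2,1$ follow by pure monotonicity: $F_{a_1}'''>0$ makes $F_{a_1}''$ increasing, hence $F_{a_1}''<\lim_{x\rightarrow\infty}F_{a_1}''=0$; then $F_{a_1}'$ is decreasing, hence $F_{a_1}'>\lim_{x\rightarrow\infty}F_{a_1}'=0$. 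This cascade eliminates your items (i) and (ii) entirely. Finally, one side remark in your proposal is wrong: the decay $F_{a_1}(x)=O(x^{-8})$ from (\ref{1.3c}) does \emph{not} imply that the derivatives of $F_{a_1}$ vanish at infinity (decay of a function controls nothing about its derivatives); drop that sentence and keep only the direct limit computation from (\ref{tri-gamma-Exp}) and (\ref{L_x})--(\ref{L_xxx}).
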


\begin{proof}
Differentiation yields%
\begin{eqnarray}
F_{a}^{\prime }\left( x\right) &=&\psi ^{\prime }\left( x+1\right) -\mathcal{%
L}_{x}\left( x,a\right) ,  \label{3.0a} \\
F_{a}^{\prime \prime }\left( x\right) &=&\psi ^{\prime \prime }\left(
x+1\right) -\mathcal{L}_{xx}\left( x,a\right) ,  \label{3.0b} \\
F_{a}^{\prime \prime \prime }\left( x\right) &=&\psi ^{\prime \prime \prime
}\left( x+1\right) -\mathcal{L}_{xxx}\left( x,a\right) ,  \label{3.0c}
\end{eqnarray}%
where $\mathcal{L}_{x}\left( x,a\right) $ $\mathcal{L}_{xx}\left( x,a\right) 
$ and $\mathcal{L}_{xxx}\left( x,a\right) $ are given by (\ref{L_x}), (\ref%
{L_xx}) and (\ref{L_xxx}), respectively

Clearly, we have 
\begin{equation}
\lim_{x\rightarrow \infty }F_{a}^{\prime }\left( x\right)
=\lim_{x\rightarrow \infty }F_{a}^{\prime \prime }\left( x\right)
=\lim_{x\rightarrow \infty }F_{a}^{\prime \prime \prime }\left( x\right) =0.
\label{3.0d}
\end{equation}

From the relation (\ref{2.2}), it is deduced that 
\begin{eqnarray*}
&&F_{a}^{\prime }\left( x+1\right) -F_{a}^{\prime }\left( x\right) =\psi
^{\prime }\left( x+2\right) -\psi ^{\prime }\left( x+1\right) -\mathcal{L}%
_{x}\left( x+1,a\right) +\mathcal{L}_{x}\left( x,a\right) \\
&=&-\tfrac{1}{\left( x+1\right) ^{2}}-\tfrac{2\left( x+1\right) +1}{\left(
90a^{2}+2\right) \left( \left( x+1\right) ^{2}+\left( x+1\right) +\left(
3a+1\right) /3\right) }-\tfrac{45a^{2}\left( 2\left( x+1\right) +1\right) }{%
\left( 90a^{2}+2\right) \left( \left( x+1\right) ^{2}+\left( x+1\right)
+\left( 15a-1\right) /\left( 45a\right) \right) } \\
&&+\tfrac{2x+1}{\left( 90a^{2}+2\right) \left( x^{2}+x+\left( 3a+1\right)
/3\right) }+\tfrac{45a^{2}\left( 2x+1\right) }{\left( 90a^{2}+2\right)
\left( x^{2}+x+\left( 15a-1\right) /\left( 45a\right) \right) },
\end{eqnarray*}%
which, by factoring and simplifying, can be written as 
\begin{equation}
F_{a}^{\prime }\left( x+1\right) -F_{a}^{\prime }\left( x\right) =\frac{%
q(x,a)}{p(x,a)},  \label{3.1}
\end{equation}%
where%
\begin{eqnarray}
q(x,a) &=&\frac{315\left( a+\frac{3\sqrt{205}-40}{105}\right) \left(
a_{1}-a\right) }{2025a}\left( x+1\right) ^{2}-\frac{\left( a+1/3\right)
^{2}\left( a-1/15\right) ^{2}}{9a^{2}}  \label{3.2} \\[0.05in]
p(x,a) &=&\left( x+1\right) ^{2}\left( x^{2}+3x+\left( a+7/3\right) \right)
\left( x^{2}+x+\left( a+1/3\right) \right)  \label{3.3} \\[0.05in]
&&\cdot \left( x^{2}+x+1/3-1/\left( 45a\right) \right) \left(
x^{2}+3x+7/3-1/\left( 45a\right) \right) >0  \notag
\end{eqnarray}

Substituting $a=a_{1}=\left( 40+3\sqrt{205}\right) /105$ into (\ref{3.2})
yields 
\begin{equation*}
q(x,a_{1})=-\tfrac{144}{1225},
\end{equation*}%
and then%
\begin{equation*}
F_{a_{1}}^{\prime }\left( x+1\right) -F_{a_{1}}^{\prime }\left( x\right) =-%
\tfrac{144}{1225}\tfrac{1}{p(x,a_{1})}.
\end{equation*}%
Differentiation again yields%
\begin{eqnarray*}
F_{a_{1}}^{\prime \prime }\left( x+1\right) -F_{a_{1}}^{\prime \prime
}\left( x\right) &=&\left( F_{a_{1}}^{\prime }\left( x+1\right)
-F_{a_{1}}^{\prime }\left( x\right) \right) ^{\prime }=\tfrac{144}{1225}%
\tfrac{p_{x}(x,a_{1})}{p^{2}(x,a_{1})}, \\
F_{a_{1}}^{\prime \prime \prime }\left( x+1\right) -F_{a_{1}}^{\prime \prime
\prime }\left( x\right) &=&\tfrac{144}{1225}\tfrac{%
p_{xx}(x,a_{1})p(x,a_{1})-2p_{x}^{2}(x,a_{1})}{p^{3}(x,a_{1})} \\
&=&-\tfrac{144}{1225}\tfrac{2}{1500625}\tfrac{\left( x+1\right) ^{2}}{%
p^{3}(x,a_{1})}r\left( x\right) ,
\end{eqnarray*}%
where 
\begin{eqnarray*}
w\left( x\right) &=&82534375\left( x+1\right) ^{16}+111903750\left(
x+1\right) ^{14}+117967500\left( x+1\right) ^{12} \\
&&+42925750\left( x+1\right) ^{10}+8270325\left( x+1\right)
^{8}-12773700\left( x+1\right) ^{6} \\
&&+3342880\left( x+1\right) ^{4}-596160\left( x+1\right) ^{2}+62208.
\end{eqnarray*}

Since $p(x,a_{1})$ is clearly positive, if we can prove $w\left( x\right) >0$
for all $x>-1$, then we have $F_{a_{1}}^{\prime \prime \prime }\left(
x+1\right) -F_{a_{1}}^{\prime \prime \prime }\left( x\right) <0$, that is, $%
F_{a_{1}}^{\prime \prime \prime }\left( x\right) >F_{a_{1}}^{\prime \prime
\prime }\left( x+1\right) $, which, by Lemma \ref{Lemma Dfx} together with $%
\lim_{x\rightarrow \infty }F_{a_{1}}^{\prime \prime \prime }\left( x\right)
=0$, yields%
\begin{equation*}
F_{a_{1}}^{\prime \prime \prime }\left( x\right) >\lim_{x\rightarrow \infty
}F_{a_{1}}^{\prime \prime \prime }\left( x\right) =0,
\end{equation*}%
that is, $x\mapsto F_{a_{1}}^{\prime \prime }\left( x\right) $ is strictly
increasing on $\left( -1,\infty \right) $, which results in 
\begin{equation*}
F_{a_{1}}^{\prime \prime }\left( x\right) <\lim_{x\rightarrow \infty
}F_{a_{1}}^{\prime \prime }\left( x\right) =0.
\end{equation*}%
It is concluded that $x\mapsto F_{a_{1}}^{\prime }\left( x\right) $ is
strictly decreasing, and then 
\begin{equation*}
F_{a_{1}}^{\prime }\left( x\right) >\lim_{x\rightarrow \infty
}F_{a_{1}}^{\prime }\left( x\right) =0.
\end{equation*}%
Hence, in order to deduce desired results, we have to show that $w\left(
x\right) >0$ for all $x>-1$. it is enough to prove $w_{8}\left( t\right) >0$
for $t>0$, where 
\begin{eqnarray*}
w_{8}\left( t\right) &=&w\left( \sqrt{t}-1\right)
=82534375t^{8}+111903750t^{7}+117967500t^{6}+42925750t^{5} \\
&&+8270325t^{4}-12773700t^{3}+3342880t^{2}-596160t+62208.
\end{eqnarray*}

Firstly, $w_{8}\left( t\right) >0$ for $t\geq 1/8$. In fact, after replacing 
$t$ by $\left( t_{1}+1/8\right) $ and expanding, we get 
\begin{eqnarray*}
w_{8}\left( t\right) &=&(82\,534\,375t_{1}^{8}+194\,438\,125t_{1}^{7}+\frac{%
4031\,873\,125}{16}t_{1}^{6}+\frac{11\,337\,407\,375}{64}t_{1}^{5} \\
&&+\frac{147\,062\,213\,725}{2048}t_{1}^{4}+\frac{15\,458\,202\,275}{4096}%
t_{1}^{3}+w_{2}\left( t_{1}\right) , \\
w_{2}\left( t_{1}\right) &=&\frac{44\,500\,267\,405}{65\,536}t_{1}^{2}-\frac{%
56\,961\,842\,735}{262\,144}t_{1}+\frac{315\,567\,169\,303}{16\,777\,216}.
\end{eqnarray*}%
Clearly, $w_{8}\left( t\right) -w_{2}\left( t_{1}\right) \geq 0$ due to $%
t_{1}=t-1/8\geq 0$. While $w_{2}\left( t_{1}\right) $ is a quadratic
polynomial, and by an ease check, the discriminant of the quadratic equation
is negative and the coefficient of cubic term is positive, and therefore $%
w_{2}\left( t_{1}\right) >0$. Thus we have $w_{8}\left( t\right) >$ for $%
t\geq 1/8$.

Secondly, we show that $w_{8}\left( t\right) >0$ for $0<t<1/8$. Since the
first five terms of eight degrees polynomial $w_{8}\left( t\right) $ is
clearly positive, it suffices to prove that the last four terms of $%
w_{8}\left( t\right) $, that is, a cubic polynomial 
\begin{equation*}
w_{3}\left( t\right) :=-12773700t^{3}+3342880t^{2}-596160t+62208>0.
\end{equation*}%
As $0<t<1/8$ we have 
\begin{eqnarray*}
w_{3}\left( t\right) &>&-12773700\left( \frac{1}{8}\right)
^{3}+3342880t^{2}-596160t+62208 \\
&=&3342\,880t^{2}-596\,160t+\frac{4769\,199}{128}>0,
\end{eqnarray*}%
where the last inequality holds due to the discriminant of the quadratic
equation is negative and the coefficient of quadratic term is positive.

This completes the proof.
\end{proof}

\begin{remark}
From the proof of Theorem \ref{MT F_a1}, we see that for $a\in \left(
1/15,\infty \right) $%
\begin{equation}
\tfrac{\partial q}{\partial a}=-\tfrac{7\left( 45a^{2}+1\right) }{2025a^{2}}%
\left( x+1\right) ^{2}-\tfrac{2\left( a+1/3\right) \left( a-1/15\right)
\left( 2025a^{2}+45\right) }{18225a^{3}}<0,  \label{dq/da<0}
\end{equation}%
which shows that function $a\rightarrow q(x,a)$ is decreasing on $\left(
1/15,\infty \right) $.
\end{remark}

\begin{remark}
In the proof of Theorem \ref{MT F_a1}, replacing $x$ by $x-1/2$ in (\ref%
{3.0a})--(\ref{3.0c}) and simplifying yield%
\begin{eqnarray*}
F_{a_{1}}^{\prime }\left( x-\tfrac{1}{2}\right) &=&\psi ^{\prime }\left( x+%
\tfrac{1}{2}\right) -\tfrac{20x\left( 84x^{2}+71\right) }{%
1680x^{4}+1560x^{2}+81}, \\
F_{a_{1}}^{\prime \prime }\left( x-\tfrac{1}{2}\right) &=&\psi ^{\prime
\prime }\left( x+\tfrac{1}{2}\right) +\tfrac{20}{3}\tfrac{%
47040x^{6}+75600x^{4}+30116x^{2}-1917}{\left( 560x^{4}+520x^{2}+27\right)
^{2}}, \\
F_{a_{1}}^{\prime \prime \prime }\left( x-\tfrac{1}{2}\right) &=&\psi
^{\prime \prime \prime }\left( x+\tfrac{1}{2}\right) -\tfrac{160}{3}\tfrac{%
x\left( 6585600x^{8}+15052800x^{6}+11696160x^{4}+1820960x^{2}-701703\right) 
}{\left( 560x^{4}+520x^{2}+27\right) ^{3}}
\end{eqnarray*}%
and utilization of Theorem \ref{MT F_a1}, it is acquired directly that for $%
x>-1/2$, the inequalities 
\begin{eqnarray}
\psi ^{\prime }\left( x+\tfrac{1}{2}\right) &>&\tfrac{20x\left(
84x^{2}+71\right) }{1680x^{4}+1560x^{2}+81},  \label{Inequal.2a} \\
\psi ^{\prime \prime }\left( x+\tfrac{1}{2}\right) &<&-\tfrac{20}{3}\tfrac{%
47040x^{6}+75600x^{4}+30116x^{2}-1917}{\left( 560x^{4}+520x^{2}+27\right)
^{2}},  \label{Inequal.2b} \\
\psi ^{\prime \prime \prime }\left( x+\tfrac{1}{2}\right) &>&\tfrac{160}{3}%
\tfrac{x\left(
6585600x^{8}+15052800x^{6}+11696160x^{4}+1820960x^{2}-701703\right) }{\left(
560x^{4}+520x^{2}+27\right) ^{3}}  \label{Inequal.2c}
\end{eqnarray}%
hold true.
\end{remark}

Using Theorem \ref{MT F_a1} with Lemma \ref{Lemma prop. L}, the following
assertion is immediate.

\begin{corollary}
\label{MC F_a1}Let the function $x\rightarrow F_{a}\left( x\right) =\psi
\left( x+1\right) -\mathcal{L}(x,a)\ $be defined on $\left( 0,\infty \right) 
$ where $\mathcal{L}(x,a)$ be defined by \ref{L(x,a)}. Then for $x>0$, $%
F_{a} $ is increasing and concave if and only if $a\geq a_{1}=\left( 40+3%
\sqrt{205}\right) /105\approx 0.79003$.
\end{corollary}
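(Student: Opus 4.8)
The plan is to read off both implications from the two results already in hand, with only the converse needing a short supplementary step; I first note that $a_1 \approx 0.79 > 1/15$, so the ray $[a_1,\infty)$ lies inside the interval $(1/15,\infty)$ on which Lemma \ref{Lemma prop. L}(ii) applies. For sufficiency I would start from \eqref{3.0a}--\eqref{3.0b}, $F_a'(x) = \psi'(x+1) - \mathcal{L}_x(x,a)$ and $F_a''(x) = \psi''(x+1) - \mathcal{L}_{xx}(x,a)$. By Lemma \ref{Lemma prop. L}(ii), for each fixed $x>0$ the map $a \mapsto \mathcal{L}_x(x,a)$ is decreasing and $a \mapsto \mathcal{L}_{xx}(x,a)$ is increasing on $(1/15,\infty)$, so $a \mapsto F_a'(x)$ is increasing and $a \mapsto F_a''(x)$ is decreasing there. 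The cases $n=1,2$ of Theorem \ref{MT F_a1}, restricted to the subinterval $(0,\infty)\subset(-1,\infty)$, give $F_{a_1}'(x) > 0$ and $F_{a_1}''(x) < 0$, so for every $a \geq a_1$ and $x>0$ I would conclude
\[
F_a'(x) \geq F_{a_1}'(x) > 0, \qquad F_a''(x) \leq F_{a_1}''(x) < 0,
\]
whence $F_a$ is increasing and concave.

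For necessity I would argue by contraposition, showing that for $a \in (1/15,a_1)$ the function $F_a$ already fails to be increasing, which negates the conjunction. The tool is the exact identity \eqref{3.1}--\eqref{3.3}, $F_a'(x+1)-F_a'(x) = q(x,a)/p(x,a)$ with $p(x,a) > 0$. Writing $a_2 = (40-3\sqrt{205})/105 < 0$, the coefficient of $(x+1)^2$ in $q(\cdot,a)$ equals $315(a - a_2)(a_1 - a)/(2025a)$, which is positive for $a \in (1/15,a_1)$ since then $a - a_2 > 0$, $a_1 - a > 0$ and $a > 0$. Hence $q(x,a) \to +\infty$, so $q(x,a) > 0$ and therefore $F_a'(x+1) - F_a'(x) > 0$ for all $x$ in some half-line $(N,\infty)$. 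Invoking Lemma \ref{Lemma Dfx} on $I = (N,\infty)$ together with $\lim_{x\to\infty}F_a'(x) = 0$ from \eqref{3.0d} then gives $F_a'(x) < 0$ for $x > N$, so $F_a$ is strictly decreasing there and cannot be increasing on $(0,\infty)$. As a check, \eqref{1.3b} gives $F_a(x) \sim c_a x^{-6}$ with $c_a = -(a-a_1)(a-a_2)/(85050a) > 0$ on this range, whence $F_a'(x) \sim -6c_a x^{-7} < 0$.

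The sufficiency is essentially bookkeeping once Lemma \ref{Lemma prop. L} and Theorem \ref{MT F_a1} are granted, and I expect no difficulty there. The only place demanding care is the necessity: I must pin down the sign of the leading coefficient of $q(x,a)$ over $(1/15,a_1)$ and, more subtly, observe that Lemma \ref{Lemma Dfx} still applies when its hypothesis holds only on a half-line $(N,\infty)$ rather than on the whole domain — this is enough because it forces $F_a' < 0$ near infinity, which is all that is needed to break monotonicity. No heavy polynomial-positivity estimate of the kind used inside Theorem \ref{MT F_a1} should be required.
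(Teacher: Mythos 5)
Your proposal is correct: the sufficiency half coincides with the paper's own argument (monotonicity in $a$ of $\mathcal{L}_x$ and $\mathcal{L}_{xx}$ from Lemma \ref{Lemma prop. L}, combined with the cases $n=1,2$ of Theorem \ref{MT F_a1}), but your necessity half takes a genuinely different route. The paper argues asymptotically: applying L'Hospital's rule to (\ref{1.3b}), it identifies
\begin{equation*}
\lim_{x\rightarrow \infty }\frac{\psi ^{\prime }\left( x+1\right) -\mathcal{L}_{x}(x,a)}{-6x^{-7}}=\lim_{x\rightarrow \infty }\frac{\psi ^{\prime\prime }\left( x+1\right) -\mathcal{L}_{xx}(x,a)}{42x^{-8}}=-\tfrac{\left( a-\frac{40+3\sqrt{205}}{105}\right) \left( a-\frac{40-3\sqrt{205}}{105}\right) }{85050a},
\end{equation*}
and notes that $F_a'\geq 0$ forces the first limit to be $\leq 0$ while $F_a''\leq 0$ forces the second to be $\leq 0$, either of which yields $a\geq a_1$. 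You instead contrapose and recycle the exact difference identity (\ref{3.1})--(\ref{3.3}): for $a\in(1/15,a_1)$ the coefficient of $(x+1)^2$ in $q(\cdot,a)$ is positive, so $q(x,a)>0$ on some half-line $(N,\infty)$, whence $F_a'(x+1)-F_a'(x)>0$ there and Lemma \ref{Lemma Dfx} gives $F_a'<0$ on $(N,\infty)$. Both work, and each buys something. The paper's version is shorter and proves slightly more: each of the two properties (increasing, concave) \emph{separately} forces $a\geq a_1$, so the threshold is sharp for monotonicity alone and for concavity alone, whereas you only exhibit failure of monotonicity (enough to negate the conjunction, but less information). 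Your version, however, is the more self-contained one: L'Hospital's rule transfers existence of a limit from the derivative ratio to the function ratio, not the reverse, so the paper's step tacitly presupposes that the displayed derivative-ratio limits exist (justifiable from the expansion (\ref{tri-gamma-Exp}), but not spelled out), while your argument needs nothing beyond the algebraic identity, the sign of one coefficient, and Lemma \ref{Lemma Dfx} on a half-line --- a legitimate use, since that lemma's proof only exploits the chain $f(x)<f(x+1)<f(x+2)<\cdots\rightarrow 0$. You were also right to demote your final computation $F_a'(x)\sim-6c_ax^{-7}$ to a mere consistency check: differentiating an asymptotic relation is precisely the kind of step that requires justification, and it is the one your main argument avoids.
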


\begin{proof}
The necessity follows from $\lim_{x\rightarrow \infty }x^{7}F_{a}^{\prime
}\left( x\right) \geq 0$ and $\lim_{x\rightarrow \infty }x^{8}F_{a}^{\prime
\prime }\left( x\right) \leq 0$. Using L'Hospital's rule two times to the
relation (\ref{1.3b}) give%
\begin{eqnarray*}
-\tfrac{\left( a-\frac{40+3\sqrt{205}}{105}\right) \left( a-\frac{40-3\sqrt{%
205}}{105}\right) }{85050a} &=&\lim_{x\rightarrow \infty }\frac{\psi
^{\prime }\left( x+1\right) -\mathcal{L}_{x}(x,a)}{-6x^{-7}}\leq 0, \\
-\tfrac{\left( a-\frac{40+3\sqrt{205}}{105}\right) \left( a-\frac{40-3\sqrt{%
205}}{105}\right) }{85050a} &=&\lim_{x\rightarrow \infty }\frac{\psi
^{\prime \prime }\left( x+1\right) -\mathcal{L}_{xx}(x,a)}{\left( -6\right)
\left( -7\right) x^{-8}}\leq 0\text{,}
\end{eqnarray*}%
which yield $a\geq a_{1}$.

By Theorem \ref{MT F_a1} with Lemma \ref{Lemma prop. L}, we obtain that for $%
a\geq a_{1}$,%
\begin{eqnarray*}
F_{a}^{\prime }\left( x\right) &=&\psi ^{\prime }\left( x+1\right) -\mathcal{%
L}_{x}(x,a)\geq \psi ^{\prime }\left( x+1\right) -\mathcal{L}_{x}(x,a_{1})>0,
\\
F_{a}^{\prime \prime }\left( x\right) &=&\psi ^{\prime \prime }\left(
x+1\right) -\mathcal{L}_{xx}(x,a)\leq \psi ^{\prime \prime }\left(
x+1\right) -\mathcal{L}_{xx}(x,a_{1})<0,
\end{eqnarray*}%
which proves the sufficiency.
\end{proof}

\begin{theorem}
\label{MT F_a0'}Let the function $x\rightarrow F_{a}\left( x\right) =\psi
\left( x+1\right) -\mathcal{L}(x,a)$ be defined on $(0,\infty )$ in which $%
\mathcal{L}(x,a)$ be defined by \ref{L(x,a)}. Then $F_{a}$ is decreasing on $%
(0,\infty )$ if and only if $a\in (1/15,a_{0}^{\prime }]$, where%
\begin{equation}
a_{0}^{\prime }=\frac{45-4\pi ^{2}+3\sqrt{4\pi ^{4}-80\pi ^{2}+405}}{%
30\left( \pi ^{2}-9\right) }\approx 0.47053.  \label{a0'}
\end{equation}
\end{theorem}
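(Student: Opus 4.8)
The plan is to turn everything into a statement that is monotone in the parameter $a$. By Lemma \ref{Lemma prop. L}(ii) the map $a\mapsto\mathcal{L}_{x}(x,a)$ is decreasing on $(1/15,\infty)$ for each fixed $x>0$, so $a\mapsto F_{a}^{\prime}(x)=\psi^{\prime}(x+1)-\mathcal{L}_{x}(x,a)$ is increasing. Consequently the set of admissible $a$ for which $F_{a}^{\prime}\le 0$ on $(0,\infty)$ is automatically a left interval $(1/15,a^{\ast}]$, and the theorem splits into (a) pinning down the endpoint $a^{\ast}$ and (b) verifying $F_{a^{\ast}}^{\prime}\le 0$. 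For the endpoint, observe that both $\psi^{\prime}$ and $\mathcal{L}_{x}(\cdot,a)$ extend continuously to $x=0$, so if $F_{a}$ is decreasing then necessarily $F_{a}^{\prime}(0)=\psi^{\prime}(1)-\mathcal{L}_{x}(0,a)\le0$, i.e. $\mathcal{L}_{x}(0,a)\ge\psi^{\prime}(1)=\pi^{2}/6$.

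The decisive computation is to evaluate (\ref{L_x}) at $x=0$ and clear denominators, which I expect to give $\mathcal{L}_{x}(0,a)=3(2025a^{4}+675a^{3}+15a-1)/\big(2(45a^{2}+1)(45a^{2}+12a-1)\big)$. Setting this equal to $\pi^{2}/6$ and cross-multiplying produces a quartic in $a$ that I anticipate factors as $(45a^{2}+1)\big[45(\pi^{2}-9)a^{2}+3(4\pi^{2}-45)a-(\pi^{2}-9)\big]=0$. Since $45a^{2}+1>0$, only the quadratic bracket survives; its roots have product $-1/45<0$, so it has exactly one positive root, and solving it reproduces precisely the constant $a_{0}^{\prime}$ in (\ref{a0'}). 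Because $\partial^{2}\mathcal{L}/\partial x\partial a$ in (\ref{L_ax}) is already negative at $x=0$, the function $a\mapsto\mathcal{L}_{x}(0,a)$ is strictly decreasing, so $\mathcal{L}_{x}(0,a)\ge\pi^{2}/6$ holds iff $a\le a_{0}^{\prime}$. This identifies $a^{\ast}=a_{0}^{\prime}$ and settles necessity; by the monotonicity reduction it also reduces sufficiency to the single case $a=a_{0}^{\prime}$.

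To prove $F_{a_{0}^{\prime}}^{\prime}\le0$ on $(0,\infty)$ I would reuse the difference identity (\ref{3.1})--(\ref{3.3}): $F_{a_{0}^{\prime}}^{\prime}(x+1)-F_{a_{0}^{\prime}}^{\prime}(x)=q(x,a_{0}^{\prime})/p(x,a_{0}^{\prime})$ with $p>0$, where by (\ref{3.2}) the function $q(\cdot,a_{0}^{\prime})$ has the form $c_{1}(x+1)^{2}+c_{0}$ with $c_{1}>0$ (because $a_{0}^{\prime}<a_{1}$) and $c_{0}<0$. Thus $q(\cdot,a_{0}^{\prime})$ is increasing and vanishes at a single point $x_{0}=\sqrt{-c_{0}/c_{1}}-1$, which a numerical check locates in $(0,1)$. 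Using $\lim_{x\to\infty}F_{a_{0}^{\prime}}^{\prime}(x)=0$ and summing the telescoping differences yields $F_{a_{0}^{\prime}}^{\prime}(x)=-\sum_{k\ge0}q(x+k,a_{0}^{\prime})/p(x+k,a_{0}^{\prime})$. For $x>x_{0}$ every term is positive (equivalently, $q(\cdot,a_{0}^{\prime})>0$ on $(x_{0},\infty)$ lets me apply Lemma \ref{Lemma Dfx} on that interval), so $F_{a_{0}^{\prime}}^{\prime}(x)<0$, and $F_{a_{0}^{\prime}}^{\prime}(x_{0})\le0$ by continuity.

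The remaining, and genuinely delicate, range is the thin strip $0<x<x_{0}$; since $x_{0}<1$, exactly one term of the series, namely $k=0$, is negative there, while all later terms are positive. The plan is to show this lone negative term is dominated. One route is local: $F_{a_{0}^{\prime}}^{\prime}(0)=0$ and a direct evaluation of (\ref{L_xx}) gives $F_{a_{0}^{\prime}}^{\prime\prime}(0)=\psi^{\prime\prime}(1)-\mathcal{L}_{xx}(0,a_{0}^{\prime})<0$, so $F_{a_{0}^{\prime}}^{\prime}$ starts strictly decreasing from $0$ and stays negative on the tiny interval provided $F_{a_{0}^{\prime}}^{\prime\prime}$ keeps its sign there, which can be enforced by an explicit bound because $x_{0}$ is so small. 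An equivalent route is to check that already the first two series terms satisfy $q(x,a_{0}^{\prime})/p(x,a_{0}^{\prime})+q(x+1,a_{0}^{\prime})/p(x+1,a_{0}^{\prime})\ge0$, the minuscule negative $q(x,a_{0}^{\prime})$ (bounded by $|q(0,a_{0}^{\prime})|$) being swamped by the sizeable positive $q(x+1,a_{0}^{\prime})$. Controlling this boundary layer is the main obstacle; everything else is bookkeeping resting on Lemma \ref{Lemma prop. L}, the difference identity from the proof of Theorem \ref{MT F_a1}, and Lemma \ref{Lemma Dfx}.
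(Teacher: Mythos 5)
Your skeleton coincides with the paper's own proof: necessity from $F_{a}^{\prime }(0)=\psi ^{\prime }(1)-\mathcal{L}_{x}(0,a)\leq 0$ (your algebra is right — the quartic does factor through $45(\pi ^{2}-9)a^{2}+3(4\pi ^{2}-45)a-(\pi ^{2}-9)$, which is exactly the numerator in the paper's displayed computation, since $(3a+1)(15a-1)=45a^{2}+12a-1$), sufficiency reduced to the single case $a=a_{0}^{\prime }$ by the monotonicity of $a\mapsto \mathcal{L}_{x}(x,a)$, Lemma \ref{Lemma Dfx} applied to $q/p$ away from the origin (the paper runs this on $(1/20,\infty )$ with the rational surrogate $a=48/100$ in place of your exact crossing point $x_{0}$), and a second-derivative argument on the remaining strip. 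The gap is in that strip, and it is not mere bookkeeping: one of your two proposed routes is provably impossible, so they are not ``equivalent.'' Since $a_{0}^{\prime }$ is by definition the root of $F_{a}^{\prime }(0)=0$, your telescoping identity at $x=0$ reads $0=-F_{a_{0}^{\prime }}^{\prime }(0)=\sum_{k\geq 0}q(k,a_{0}^{\prime })/p(k,a_{0}^{\prime })$, and every term with $k\geq 1$ is strictly positive (your $q(\cdot ,a_{0}^{\prime })$ is increasing with $q(1,a_{0}^{\prime })=4c_{1}+c_{0}>0$). Hence $q(0,a_{0}^{\prime })/p(0,a_{0}^{\prime })+q(1,a_{0}^{\prime })/p(1,a_{0}^{\prime })=-\sum_{k\geq 2}q(k,a_{0}^{\prime })/p(k,a_{0}^{\prime })<0$ (numerically about $-8\times 10^{-6}$): the two-term check of route (b) fails at $x=0$, hence by continuity on a whole interval $(0,\delta )$. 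This is structural, not a numerical accident — any finite truncation argument must fail near $0$, because the full series only just balances there; that is precisely what $F_{a_{0}^{\prime }}^{\prime }(0)=0$ means.

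Route (a) is the viable one, and it is the paper's Case 2, but your sketch leaves its crucial step blank. Asserting that $F_{a_{0}^{\prime }}^{\prime \prime }<0$ on the strip ``can be enforced by an explicit bound'' hides all the work: one needs a computable upper bound for $\psi ^{\prime \prime }(x+1)$ near $x=0$, and the paper manufactures it from its own Theorem \ref{MT F_a1} — it shifts inequality (\ref{Inequal.2b}) by $3/2$, converts it with the recurrence (\ref{2.2}), replaces $\mathcal{L}_{xx}(x,a_{0}^{\prime })$ by $\mathcal{L}_{xx}(x,9/20)$ via Lemma \ref{Lemma prop. L} to rationalize the coefficients, and then settles the sign of an explicit degree-$11$ polynomial on $(0,1/20]$. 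Your plan becomes a proof once you supply some such bound; since $x_{0}\approx 0.002$ is tiny, even a cruder one would do, e.g. $F_{a_{0}^{\prime }}^{\prime \prime }(x)\leq F_{a_{0}^{\prime }}^{\prime \prime }(0)+x\sup_{[0,x_{0}]}|F_{a_{0}^{\prime }}^{\prime \prime \prime }|$ together with $0<\psi ^{\prime \prime \prime }(x+1)\leq 6\zeta (4)$ for $x\geq 0$ and an explicit bound on $\mathcal{L}_{xxx}$, plus a sufficiently accurate evaluation showing $F_{a_{0}^{\prime }}^{\prime \prime }(0)$ is negative and bounded away from $0$. As written, however, the hardest step of the theorem is exactly the one left unproved, and the alternative you offer in its place cannot work.
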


\begin{proof}
\textbf{Necessity. }The necessity is deduced from%
\begin{eqnarray*}
F_{a}^{\prime }\left( 0\right) &=&\psi ^{\prime }\left( 1\right) -\mathcal{L}%
_{x}(0,a) \\
&=&\frac{\pi ^{2}}{6}-\frac{1}{\left( 90a^{2}+2\right) \left( a+1/3\right) }-%
\frac{45a^{2}}{\left( 90a^{2}+2\right) \left( 15a-1\right) /\left(
45a\right) } \\
&=&\frac{1}{6}\frac{45\left( \pi ^{2}-9\right) a^{2}-3\left( 45-4\pi
^{2}\right) a-\left( \pi ^{2}-9\right) }{\left( 3a+1\right) \left(
15a-1\right) }\leq 0,
\end{eqnarray*}%
which in combination with $a\in \left( 1/15,\infty \right) $ gives $a\in
(1/15,a_{0}^{\prime }]$.

\textbf{Sufficiency}. Due to Lemma \ref{Lemma prop. L}, $a\mapsto \mathcal{L}%
_{x}(x,a)$ is decreasing on $\left( 1/15,\infty \right) $, to prove the
sufficiency, it is enough to prove $F_{a_{0}^{\prime }}^{\prime }\left(
x\right) <0$ for $x\in (0,\infty )$. We distinguish two cases to prove it.

Case 1: $x\in \left( 1/20,\infty \right) $. From (\ref{3.2}) and (\ref%
{dq/da<0}) and $a_{0}^{\prime }<48/100$, we have%
\begin{eqnarray*}
q(x,a_{0}^{\prime }) &>&q(x,\frac{48}{100})>q(\frac{1}{20},\frac{48}{100}) \\
&=&\left[ \tfrac{315\left( a+\frac{3\sqrt{205}-40}{105}\right) \left( \frac{3%
\sqrt{205}+40}{105}-a\right) }{2025a}\left( \frac{1}{20}+1\right) ^{2}-%
\tfrac{\left( a+1/3\right) ^{2}\left( a-1/15\right) ^{2}}{9a^{2}}\right]
_{a=48/100} \\
&=&\frac{2341\,501}{1312\,200\,000}>0\text{,}
\end{eqnarray*}%
which in conjunction with (\ref{3.1}) and (\ref{3.3}) yields%
\begin{equation*}
F_{a_{0}^{\prime }}^{\prime }\left( x+1\right) -F_{a_{0}^{\prime }}^{\prime
}\left( x\right) >0.
\end{equation*}%
Hence, we conclude that $F_{a_{0}^{\prime }}^{\prime }\left( x\right)
<\lim_{x\rightarrow \infty }F_{a_{0}^{\prime }}^{\prime }\left( x\right) =0$.

Case 2: $x\in (0,1/20]$. If we show that $F_{a_{0}^{\prime }}^{\prime \prime
}\left( x\right) <0$ for $x\in (0,1/20]$, then we get $F_{a_{0}^{\prime
}}^{\prime }\left( x\right) \leq F_{a_{0}^{\prime }}^{\prime }\left(
0\right) =0$, which proves the desired result. Replacing $x$ by $x+3/2$ in (%
\ref{Inequal.2b}) and using (\ref{2.2}), we have 
\begin{equation*}
\psi ^{\prime \prime }\left( x+1\right) <-\tfrac{20}{3}\tfrac{47040\left(
x+3/2\right) ^{6}+75600\left( x+3/2\right) ^{4}+30116\left( x+3/2\right)
^{2}-1917}{\left( 560\left( x+3/2\right) ^{4}+520\left( x+3/2\right)
^{2}+27\right) ^{2}}-\tfrac{2}{\left( x+1\right) ^{3}}.
\end{equation*}%
Since $a\mapsto \mathcal{L}_{xx}\left( x,a_{0}\right) $ is increasing by
Lemma \ref{Lemma prop. L} and $a_{0}^{\prime }\approx 0.47053>9/20$, we get 
\begin{equation*}
\mathcal{L}_{xx}\left( x,a_{0}^{\prime }\right) >\mathcal{L}_{xx}\left( x,%
\tfrac{9}{20}\right) =-\tfrac{80}{809}\tfrac{x^{2}+x-\frac{17}{60}}{\left(
x^{2}+x+\frac{47}{60}\right) ^{2}}-\tfrac{729}{809}\tfrac{x^{2}+x+\frac{35}{%
162}}{\left( x^{2}+x+\frac{23}{81}\right) ^{2}},
\end{equation*}%
where $\mathcal{L}_{xx}\left( x,a\right) $ is given by (\ref{L_xx}). Thus,
we have 
\begin{eqnarray*}
F_{a_{0}^{\prime }}^{\prime \prime }\left( x\right) &=&\psi ^{\prime \prime
}\left( x+1\right) -\mathcal{L}_{xx}\left( x,a_{0}^{\prime }\right) \\
&<&-\tfrac{20}{3}\tfrac{47040\left( x+1+1/2\right) ^{6}+75600\left(
x+1+1/2\right) ^{4}+30116\left( x+1+1/2\right) ^{2}-1917}{\left( 560\left(
x+1+1/2\right) ^{4}+520\left( x+1+1/2\right) ^{2}+27\right) ^{2}} \\
&&-\tfrac{2}{\left( x+1\right) ^{3}}-\mathcal{L}_{xx}\left( x,\tfrac{9}{20}%
\right) \\
&=&-\tfrac{20}{3}\tfrac{47040\left( x+1+1/2\right) ^{6}+75600\left(
x+1+1/2\right) ^{4}+30116\left( x+1+1/2\right) ^{2}-1917}{\left( 560\left(
x+1+1/2\right) ^{4}+520\left( x+1+1/2\right) ^{2}+27\right) ^{2}}-\tfrac{2}{%
\left( x+1\right) ^{3}} \\
&&+\tfrac{80}{809}\tfrac{x^{2}+x-\frac{17}{60}}{\left( x^{2}+x+\frac{47}{60}%
\right) ^{2}}+\tfrac{729}{809}\tfrac{x^{2}+x+\frac{35}{162}}{\left( x^{2}+x+%
\frac{23}{81}\right) ^{2}}.
\end{eqnarray*}%
Factoring and arranging lead to%
\begin{equation*}
F_{a_{0}^{\prime }}^{\prime \prime }\left( x\right) <\frac{1}{6}\frac{%
P\left( x\right) }{Q\left( x\right) },
\end{equation*}%
where%
\begin{eqnarray*}
P\left( x\right)
&=&9756\,595\,800x^{11}+146\,348\,937\,000x^{10}+1005\,597\,383\,250x^{9} \\
&&+3954\,619\,691\,700x^{8}+9800\,346\,642\,855x^{7}+16\,058\,808\,560%
\,085x^{6} \\
&&+17\,731\,092\,059\,926x^{5}+13\,107\,900\,251\,862x^{4}+6210\,045\,031%
\,977x^{3} \\
&&+1655\,666\,210\,995x^{2}+153\,061\,816\,584x-15\,463\,394\,658,
\end{eqnarray*}%
\begin{eqnarray*}
Q\left( x\right) &=&\left( 60x^{2}+60x+47\right) ^{2}\left(
81x^{2}+81x+23\right) ^{2} \\
&&\times \left( 570x+505x^{2}+210x^{3}+35x^{4}+252\right) ^{2}\left(
x+1\right) ^{3}.
\end{eqnarray*}%
Clear, $Q\left( x\right) >0$ for $x\in (0,1/20]$. While $P\left( x\right) <0$
for $x\in (0,1/20]$ due to $P^{\prime }\left( x\right) >0$ and so 
\begin{equation*}
P\left( x\right) \leq P\left( \tfrac{1}{20}\right) =-\frac{%
2874\,530\,403\,954\,909\,124\,821}{1024\,000\,000\,000}<0,
\end{equation*}%
which leads to $F_{a_{0}^{\prime }}^{\prime \prime }\left( x\right) <0$ for $%
x\in (0,1/20]$.

This completes the proof.
\end{proof}

\begin{theorem}
\label{MT F_a0''}Let the function $x\rightarrow F_{a}\left( x\right) =\psi
\left( x+1\right) -\mathcal{L}(x,a)$ be defined on $(0,\infty )$ in which $%
\mathcal{L}(x,a)$ be defined by \ref{L(x,a)}. Then $F_{a}$ is convex on $%
(0,\infty )$ if and only if $a\in (1/15,a_{0}^{\prime \prime }]$, where $%
a_{0}^{\prime \prime }\approx 0.4321803644583305$ is the unique root of the
equation 
\begin{equation*}
F_{a}^{\prime \prime }\left( 0\right) =\psi ^{\prime \prime }\left( 1\right)
-\mathcal{L}_{xx}\left( 0,a\right) =0
\end{equation*}%
on $\left( 1/15,\infty \right) $, here $\mathcal{L}_{xx}\left( x,a\right) $
is defined by (\ref{L_xx}).
\end{theorem}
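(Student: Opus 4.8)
The plan is to mirror the proof of Theorem \ref{MT F_a0'}, raising every derivative by one order. Convexity of $F_a$ on $(0,\infty)$ means $F_a''(x)=\psi''(x+1)-\mathcal{L}_{xx}(x,a)\geq 0$ for all $x>0$, with $\mathcal{L}_{xx}$ as in (\ref{L_xx}). For the necessity I would note that convexity forces $F_a''(0)=\lim_{x\to 0^+}F_a''(x)\geq 0$. By Lemma \ref{Lemma prop. L}(ii) the map $a\mapsto\mathcal{L}_{xx}(x,a)$ is increasing, so $a\mapsto F_a''(0)=\psi''(1)-\mathcal{L}_{xx}(0,a)$ is strictly decreasing on $(1/15,\infty)$. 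Inspecting the two endpoints — $\mathcal{L}_{xx}(0,a)\to-\infty$ as $a\to 1/15^+$ (the argument $(15a-1)/(45a)$ of the second logarithm tends to $0$) and $\mathcal{L}_{xx}(0,a)\to -3/2$ as $a\to\infty$ by (\ref{2.0}) — shows $F_a''(0)$ decreases from $+\infty$ to $\psi''(1)+3/2<0$. Hence $F_a''(0)=0$ has a unique root $a_0''$ in $(1/15,\infty)$, and $F_a''(0)\geq 0$ exactly when $a\in(1/15,a_0'']$; this yields the necessity.

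For the sufficiency, since $a\mapsto F_a''(x)$ is decreasing it suffices to prove $F_{a_0''}''(x)\geq 0$ for all $x>0$, where now $F_{a_0''}''(0)=0$. I would split at a small rational threshold $x=c$. On $x\geq c$ I would use that, by (\ref{2.2}), the unit-shift difference telescopes the $\psi$ term away:
\begin{equation*}
F_{a_0''}''(x+1)-F_{a_0''}''(x)=\frac{2}{(x+1)^3}-\big(\mathcal{L}_{xx}(x+1,a_0'')-\mathcal{L}_{xx}(x,a_0'')\big),
\end{equation*}
a purely rational expression. By Remark \ref{Remark D} the map $a\mapsto\mathcal{L}_{xx}(x+1,a)-\mathcal{L}_{xx}(x,a)$ is decreasing, so replacing $a_0''$ by a convenient rational $a''>a_0''$ only decreases the parenthesis; it therefore suffices to verify $\mathcal{L}_{xx}(x+1,a'')-\mathcal{L}_{xx}(x,a'')>2/(x+1)^3$ for $x\geq c$, a polynomial inequality once denominators are cleared. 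This makes the displayed difference negative on $(c,\infty)$, and since $\lim_{x\to\infty}F_{a_0''}''(x)=0$, Lemma \ref{Lemma Dfx} gives $F_{a_0''}''(x)>0$ for $x>c$. On $0<x\leq c$ I would instead show $F_{a_0''}'''(x)>0$: bound $\psi'''(x+1)$ from below by replacing $x$ with $x+3/2$ in (\ref{Inequal.2c}) and applying (\ref{2.2}), and bound $\mathcal{L}_{xxx}(x,a_0'')$ from above by $\mathcal{L}_{xxx}(x,a')$ for a rational $a'<a_0''$ (valid because $a\mapsto\mathcal{L}_{xxx}$ is decreasing by Lemma \ref{Lemma prop. L}(ii)); this again reduces to a polynomial positivity check. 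Then $F_{a_0''}''$ is increasing on $(0,c]$, so $F_{a_0''}''(x)\geq F_{a_0''}''(0)=0$.

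The delicate step is the large-$x$ case. There the two quantities being compared, $\mathcal{L}_{xx}(x+1,a'')-\mathcal{L}_{xx}(x,a'')$ and $2/(x+1)^3$, agree to leading order (both behave like $2x^{-3}$), reflecting the fact that $F_{a_0''}''(x)\to 0$ and that its sign for large $x$ is decided only by the leading coefficient in (\ref{1.3b}). After clearing denominators the cancellation is severe, so the surviving polynomial must be shown positive even though no single group of terms makes this transparent. Choosing the surrogate $a''$ and the threshold $c$ so that this positivity — and the companion inequality in the small-$x$ case — can be certified is the crux, and it is precisely here that the computer-algebra computations are indispensable.
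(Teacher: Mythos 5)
Your proposal is correct and follows essentially the same route as the paper's own proof: necessity from the monotonicity of $a\mapsto F_{a}^{\prime\prime}(0)$ (the paper certifies the sign change with the test values $a=1/3$ and $a=1/2$ rather than your endpoint limits), and sufficiency by splitting at a small threshold (the paper takes $c=3/50$), treating large $x$ via the unit-shift difference with the rational surrogate $a^{\prime\prime}=9/20$, Remark \ref{Remark D} and Lemma \ref{Lemma Dfx}, and treating small $x$ by proving $F_{a_{0}^{\prime\prime}}^{\prime\prime\prime}>0$ from the bound (\ref{Inequal.2c}) shifted by $3/2$ together with the surrogate $a^{\prime}=21/50$. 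The polynomial positivity checks you defer to computer algebra are exactly the ones the paper carries out for its explicit polynomials $r(x)$ and $R(x)$.
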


\begin{proof}
\textbf{Necessity}. The necessity is deduced from%
\begin{eqnarray*}
F_{a}^{\prime \prime }\left( 0\right) &=&\psi ^{\prime \prime }\left(
1\right) -\mathcal{L}_{xx}\left( 0,a\right) \\
&=&\psi ^{\prime \prime }\left( 1\right) -\frac{a-\frac{1}{6}}{\left(
45a^{2}+1\right) \left( a+\frac{1}{3}\right) ^{2}}+\frac{2025}{2}\frac{%
a^{3}\left( 15a+2\right) }{\left( 15a-1\right) ^{2}\left( 45a^{2}+1\right) }
\\
&=&\psi ^{\prime \prime }\left( 1\right) +\frac{3}{2}\frac{%
2025a^{4}+1620a^{3}+360a^{2}-36a+1}{\left( 3a+1\right) ^{2}\left(
15a-1\right) ^{2}}\geq 0.
\end{eqnarray*}%
Since $a\mapsto \mathcal{L}_{xx}\left( x,a\right) $ is increasing on $\left(
1/15,\infty \right) $, so $a\mapsto F_{a}^{\prime \prime }\left( 0\right) $
is decreasing on the same interval. Note that the facts%
\begin{equation*}
F_{1/3}^{\prime \prime }\left( 0\right) =\frac{171}{64}-2\zeta \left(
3\right) \approx 0.26776>0\text{ and }F_{1/2}^{\prime \prime }\left(
0\right) =\frac{19\,299}{8450}-2\zeta \left( 3\right) \approx -0.12021<0,
\end{equation*}%
we see that the equation $F_{a}^{\prime \prime }\left( 0\right) =0$ has a
unique solution $a_{0}^{\prime \prime }\in \left( 1/3,1/2\right) $ such that 
$F_{a}^{\prime \prime }\left( 0\right) >0$ for $a\in (1/15,a_{0}^{\prime
\prime })$ and $F_{a}^{\prime \prime }\left( 0\right) <0$ for $a\in
(a_{0}^{\prime \prime },\infty )$. Therefore, the solution of the inequality 
$F_{a}^{\prime \prime }\left( 0\right) \geq 0$ is $a\in (1/15,a_{0}^{\prime
\prime }]$. Numerical calculation gives $a_{0}^{\prime \prime }\approx
0.4321803644583305$.

\textbf{Sufficiency. }Now we prove the condition $a\in (1/15,a_{0}^{\prime
\prime }]$ is sufficient for $F_{a}^{\prime \prime }\left( x\right) >0$ to
hold for $x\in \left( 0,\infty \right) $. Due to the increasing property of $%
\mathcal{L}_{xx}\left( x,a\right) $ with respect to $a$ shown by Lemma \ref%
{Lemma prop. L}, we only need to prove $F_{a_{0}^{\prime \prime }}^{\prime
\prime }\left( x\right) >0$ We distinguish to cases:

Case 1: $x\in \left( 3/50,\infty \right) $. Using (\ref{2.2}) and Remark \ref%
{Remark D} together with $a_{0}^{\prime \prime }<9/20$, we have%
\begin{eqnarray*}
F_{a}^{\prime \prime }\left( x+1\right) -F_{a}^{\prime \prime }\left(
x\right) &=&\psi ^{\prime \prime }\left( x+2\right) -\psi ^{\prime \prime
}\left( x+1\right) -\mathcal{L}_{xx}\left( x+1,a\right) +\mathcal{L}%
_{xx}\left( x,a\right) \\
&=&\frac{2}{\left( x+1\right) ^{3}}-\left( \mathcal{L}_{xx}\left(
x+1,9/20\right) -\mathcal{L}_{xx}\left( x,9/20\right) \right) :=-2\frac{%
r\left( x\right) }{s\left( x\right) },
\end{eqnarray*}%
where%
\begin{eqnarray*}
r\left( x\right)
&=&125\,413\,273\,555\,200x^{10}+1254\,132\,735\,552\,000x^{9}+5518\,250%
\,043\,762\,960x^{8} \\
&&+14\,046\,814\,696\,855\,680x^{7}+22\,840\,386\,490\,946\,664x^{6}+24\,664%
\,633\,018\,794\,864x^{5} \\
&&+17\,718\,225\,566\,437\,953x^{4}+8120\,232\,997\,769\,412x^{3}+2081\,281%
\,129\,927\,908x^{2} \\
&&+179\,154\,971\,702\,976x-19\,953\,618\,766\,474,
\end{eqnarray*}%
\begin{eqnarray*}
s\left( x\right) &=&\left( 60x+60x^{2}+47\right) ^{2}\left(
81x+81x^{2}+23\right) ^{2} \\
&&\times \left( 180x+60x^{2}+167\right) ^{2}\left( 243x+81x^{2}+185\right)
^{2}\left( x+1\right) ^{3}.
\end{eqnarray*}

Since $r^{\prime }\left( x\right) >0$, we get%
\begin{equation*}
r\left( x\right) >r\left( 3/50\right) =\frac{1114\,560\,148\,894\,087\,067%
\,992\,508}{3814\,697\,265\,625}>0
\end{equation*}%
for $x\in \left( 3/50,\infty \right) $, while $s\left( x\right) $ is
obviously positive on the same interval. It follows that $F_{a}^{\prime
\prime }\left( x+1\right) -F_{a}^{\prime \prime }\left( x\right) <0$ for $%
x\in \left( 1/10,\infty \right) $, and therefore, $F_{a}^{\prime \prime
}\left( x\right) >\lim_{x\rightarrow \infty }F_{a}^{\prime \prime }\left(
x\right) =0$ for $x\in \left( 1/10,\infty \right) $.

Case 2: $x\in (0,3/50]$. If we show that $F_{a_{0}^{\prime \prime }}^{\prime
\prime \prime }\left( x\right) >0$ for $x\in (0,3/50]$, then we get $%
F_{a_{0}^{\prime \prime }}^{\prime \prime }\left( x\right) \geq
F_{a_{0}^{\prime \prime }}^{\prime \prime }\left( 0\right) =0$, which proves
the desired result. Now replacing $x$ by $x+3/2$ in (\ref{Inequal.2c}) and
using (\ref{2.2}), we have%
\begin{eqnarray*}
\psi ^{\prime \prime \prime }\left( x+1\right) &>&\tfrac{6}{\left(
x+1\right) ^{4}}+\tfrac{160}{3}\left( x+3/2\right) \\
&&\times \tfrac{6585600\left( x+3/2\right) ^{8}+15052800\left( x+3/2\right)
^{6}+11696160\left( x+3/2\right) ^{4}+1820960\left( x+3/2\right) ^{2}-701703%
}{\left( 560\left( x+3/2\right) ^{4}+520\left( x+3/2\right) ^{2}+27\right)
^{3}}.
\end{eqnarray*}%
In view of $a\mapsto \mathcal{L}_{xxx}\left( x,a\right) $ is decreasing by
Lemma \ref{Lemma prop. L} and $a_{0}^{\prime \prime }\approx 0.432180>21/50$%
, we get%
\begin{equation*}
\mathcal{L}_{xxx}\left( x,a_{0}^{\prime \prime }\right) <\mathcal{L}%
_{xxx}\left( x,\frac{21}{50}\right) =\tfrac{3969}{4469}\tfrac{\left(
2x+1\right) \left( x^{2}+x+\frac{10}{63}\right) }{\left( x^{2}+x+\frac{53}{%
189}\right) ^{3}}+\tfrac{500}{4469}\tfrac{\left( 2x+1\right) \left( x^{2}+x-%
\frac{63}{50}\right) }{\left( x^{2}+x+\frac{113}{150}\right) ^{3}}.
\end{equation*}%
Then we have%
\begin{eqnarray*}
F_{a_{0}^{\prime \prime }}^{\prime \prime \prime }\left( x\right) &=&\psi
^{\prime \prime \prime }\left( x+1\right) -\mathcal{L}_{xx}\left(
x,a_{0}^{\prime \prime }\right) \\
&>&\tfrac{6}{\left( x+1\right) ^{4}}+\tfrac{160}{3}\left( x+3/2\right) \times
\\
&&\tfrac{6585600\left( x+3/2\right) ^{8}+15052800\left( x+3/2\right)
^{6}+11696160\left( x+3/2\right) ^{4}+1820960\left( x+3/2\right) ^{2}-701703%
}{\left( 560\left( x+3/2\right) ^{4}+520\left( x+3/2\right) ^{2}+27\right)
^{3}} \\
&&-\tfrac{3969}{4469}\tfrac{\left( 2x+1\right) \left( x^{2}+x+\frac{10}{63}%
\right) }{\left( x^{2}+x+\frac{53}{189}\right) ^{3}}-\tfrac{500}{4469}\tfrac{%
\left( 2x+1\right) \left( x^{2}+x-\frac{63}{50}\right) }{\left( x^{2}+x+%
\frac{113}{150}\right) ^{3}}.
\end{eqnarray*}%
Factoring and arranging lead to%
\begin{equation*}
F_{a_{0}^{\prime \prime }}^{\prime \prime \prime }\left( x\right) >-\frac{1}{%
3}\frac{R\left( x\right) }{S\left( x\right) },
\end{equation*}%
where%
\begin{eqnarray*}
R\left( x\right)
&=&1439\,970\,288\,529\,500\,000x^{19}+33\,839\,301\,780\,443\,250\,000x^{18}
\\
&&+377\,685\,219\,317\,959\,507\,500x^{17}+2619\,038\,198\,507\,995\,293%
\,750x^{16} \\
&&+12\,578\,516\,662\,166\,748\,200\,250x^{15}+44\,394\,499\,254\,715\,419%
\,844\,125x^{14} \\
&&+119\,436\,801\,689\,614\,664\,479\,875x^{13}+250\,817\,342\,412\,016\,626%
\,059\,625x^{12} \\
&&+417\,457\,335\,039\,758\,233\,395\,000x^{11}+555\,642\,395\,442\,917\,892%
\,895\,800x^{10} \\
&&+593\,602\,907\,219\,352\,981\,396\,390x^{9}+508\,233\,654\,389\,427\,279%
\,197\,745x^{8} \\
&&+346\,198\,219\,129\,731\,218\,829\,124x^{7}+184\,849\,155\,080\,550\,188%
\,733\,310x^{6} \\
&&+75\,353\,569\,007\,634\,565\,613\,769x^{5}+22\,380\,430\,314\,381\,942%
\,509\,812x^{4} \\
&&+4414\,609\,088\,286\,249\,144\,994x^{3}+450\,421\,073\,304\,504\,390%
\,873x^{2} \\
&&-4721\,565\,008\,851\,422\,102x-4420\,688\,040\,144\,642\,816,
\end{eqnarray*}%
\begin{eqnarray*}
S\left( x\right) &=&\left( x+1\right) ^{4}\left( 150x^{2}+150x+113\right)
^{3}\left( 189x^{2}+189x+53\right) ^{3} \\
&&\times \left( 35x^{4}+210x^{3}+505x^{2}+570x+252\right) ^{3}.
\end{eqnarray*}%
A simple computation gives $R^{\prime \prime }\left( x\right) >0$ and%
\begin{eqnarray*}
R\left( 0\right) &=&-4420\,688\,040\,144\,642\,816<0, \\
R\left( \tfrac{3}{50}\right) &=&-\tfrac{337\,711\,343\,455\,989\,855\,048%
\,292\,675\,691\,209\,992\,531\,618\,111}{190\,734\,863\,281\,250\,000\,000%
\,000\,000}<0,
\end{eqnarray*}%
which by property of convex functions yield that for $x\in (0,3/50]$,%
\begin{equation*}
R\left( x\right) \leq \frac{3/50-x}{3/50}R\left( 0\right) +\frac{x}{3/50}%
R\left( \tfrac{3}{50}\right) <0.
\end{equation*}%
This in combination with $S\left( x\right) >0$ gives $F_{a_{0}^{\prime
\prime }}^{\prime \prime \prime }\left( x\right) >0$ for $x\in (0,3/50]$.

Thus we complete the proof.
\end{proof}

As a direct consequence of Theorems \ref{MT F_a0'} and \ref{MT F_a0''}, we
have

\begin{corollary}
\label{MC F M-C}Let the function $x\rightarrow F_{a}\left( x\right) =\psi
\left( x+1\right) -\mathcal{L}(x,a)\ $be defined on $\left( 0,\infty \right) 
$ where $\mathcal{L}(x,a)$ be defined by \ref{L(x,a)}. Then for $x>0$, $%
F_{a} $ is decreasing and convex if and only if $a\in \left(
1/15,a_{0}^{\prime \prime }\right) $, where $a_{0}^{\prime \prime }\approx
0.4321803644583305$ is defined in \ref{MT F_a0''}.
\end{corollary}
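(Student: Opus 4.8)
The plan is to treat this corollary exactly as the paper advertises it, namely as a \emph{direct consequence} of the two preceding theorems, obtained by intersecting the admissible parameter ranges for the two separate properties. The statement ``$F_a$ is decreasing and convex'' is nothing more than the logical conjunction of ``$F_a$ is decreasing'' and ``$F_a$ is convex''; hence the set of admissible $a$ is simply the intersection of the two sets already characterized in Theorems \ref{MT F_a0'} and \ref{MT F_a0''}, and no new analysis of $\psi$ or $\mathcal{L}$ is required.

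Concretely, I would proceed in three short steps. First, by Theorem \ref{MT F_a0'}, the function $F_a$ is decreasing on $(0,\infty)$ if and only if
\begin{equation*}
a\in(1/15,a_{0}^{\prime}],\qquad a_{0}^{\prime}\approx 0.47053 .
\end{equation*}
Second, by Theorem \ref{MT F_a0''}, the function $F_a$ is convex on $(0,\infty)$ if and only if
\begin{equation*}
a\in(1/15,a_{0}^{\prime\prime}],\qquad a_{0}^{\prime\prime}\approx 0.43218 .
\end{equation*}
Third, both properties hold simultaneously if and only if $a$ lies in the intersection $(1/15,a_{0}^{\prime}]\cap(1/15,a_{0}^{\prime\prime}]$. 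Since the two intervals share the same left endpoint $1/15$, the intersection is governed entirely by the smaller of the two right endpoints.

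The one step that is not purely formal is deciding which upper threshold is binding. Comparing the numerical values from the two theorems gives $a_{0}^{\prime\prime}\approx 0.43218<0.47053\approx a_{0}^{\prime}$, so the convexity constraint is the tighter of the two, and the intersection collapses to the interval determined by $a_{0}^{\prime\prime}$. Therefore $F_a$ is at once decreasing and convex precisely when $a\in(1/15,a_{0}^{\prime\prime}]$, which is the asserted conclusion.

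I do not expect any real obstacle here, since all of the substantive work has already been discharged in Theorems \ref{MT F_a0'} and \ref{MT F_a0''}; what remains is the elementary comparison $a_{0}^{\prime\prime}<a_{0}^{\prime}$ and the observation that intersecting two intervals with a common left endpoint retains the smaller right endpoint. The only subtlety worth flagging is the endpoint $a=a_{0}^{\prime\prime}$: because $a_{0}^{\prime\prime}<a_{0}^{\prime}$ \emph{strictly}, at $a=a_{0}^{\prime\prime}$ the function $F_a$ is still decreasing and is convex (the boundary case of Theorem \ref{MT F_a0''}), so the admissible set is in fact the half-open interval $(1/15,a_{0}^{\prime\prime}]$; I would record this and reconcile it with the form of the interval as stated.
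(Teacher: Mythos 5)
Your proof is correct and is exactly the paper's (implicit) argument: the paper offers no separate proof, stating the corollary as a direct consequence of Theorems \ref{MT F_a0'} and \ref{MT F_a0''}, i.e., as the intersection $\left(1/15,a_{0}^{\prime}\right]\cap\left(1/15,a_{0}^{\prime\prime}\right]=\left(1/15,a_{0}^{\prime\prime}\right]$ using the numerical comparison $a_{0}^{\prime\prime}\approx 0.43218<0.47053\approx a_{0}^{\prime}$. Your flag about the endpoint is also well taken: since $a_{0}^{\prime\prime}<a_{0}^{\prime}$ strictly, both properties do hold at $a=a_{0}^{\prime\prime}$, so the admissible set is the half-open interval $\left(1/15,a_{0}^{\prime\prime}\right]$, and the open interval written in the corollary's statement is a slight inaccuracy of the paper rather than a defect of your argument.
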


A easy computation gives%
\begin{eqnarray*}
\mathcal{L}_{x}(x,a_{1}) &=&\left( x+\tfrac{1}{2}\right) \tfrac{x+x^{2}+23/21%
}{x^{4}+2x^{3}+17x^{2}/7+10x/7+12/35}, \\
\mathcal{L}_{x}(x,a_{0}^{\prime }) &=&\left( x+\tfrac{1}{2}\right) \tfrac{%
x^{2}+x+\frac{\pi ^{2}}{15\left( \pi ^{2}-9\right) }}{x^{4}+2x^{3}+\frac{%
7\pi ^{2}-60}{5\left( \pi ^{2}-9\right) }x^{2}+\frac{2\pi ^{2}-15}{5\left(
\pi ^{2}-9\right) }x+\frac{1}{5\left( \pi ^{2}-9\right) }},
\end{eqnarray*}%
and by Corollary \ref{MC F_a1} and Theorem \ref{MT F_a0'} we obtain the
following optimal inequalities.

\begin{corollary}
For $x>0$, the double inequality%
\begin{equation*}
\mathcal{L}_{x}(x,a_{1})<\psi ^{\prime }\left( x+1\right) <\mathcal{L}%
_{x}(x,a_{0}^{\prime })
\end{equation*}%
or equivalently,%
\begin{equation*}
\left( x+\tfrac{1}{2}\right) \tfrac{x+x^{2}+\frac{23}{21}}{x^{4}+2x^{3}+%
\frac{17}{7}x^{2}+\frac{10}{7}x+\frac{12}{35}}<\psi ^{\prime }\left(
x+1\right) <\left( x+\tfrac{1}{2}\right) \tfrac{x^{2}+x+\frac{\pi ^{2}}{%
15\left( \pi ^{2}-9\right) }}{x^{4}+2x^{3}+\frac{7\pi ^{2}-60}{5\left( \pi
^{2}-9\right) }x^{2}+\frac{2\pi ^{2}-15}{5\left( \pi ^{2}-9\right) }x+\frac{1%
}{5\left( \pi ^{2}-9\right) }}
\end{equation*}%
holds with the best constants 
\begin{equation*}
a_{1}=\tfrac{40+3\sqrt{205}}{105}\approx 0.79003\text{ and }a_{0}^{\prime }=%
\tfrac{45-4\pi ^{2}+3\sqrt{4\pi ^{4}-80\pi ^{2}+405}}{30\left( \pi
^{2}-9\right) }\approx 0.47053.
\end{equation*}
\end{corollary}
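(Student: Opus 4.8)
The plan is to read both inequalities directly off the monotonicity results already established, since $F_a'(x)=\psi'(x+1)-\mathcal{L}_x(x,a)$ is exactly the difference appearing in the claim. First I would note that by Corollary \ref{MC F_a1} the choice $a=a_1$ makes $F_{a_1}$ strictly increasing on $(0,\infty)$; indeed the sufficiency argument there (together with Theorem \ref{MT F_a1}, which gives $F_{a_1}'(x)>0$ for all $x>-1$) yields $F_{a_1}'(x)=\psi'(x+1)-\mathcal{L}_x(x,a_1)>0$ for every $x>0$, which is precisely the left-hand inequality $\mathcal{L}_x(x,a_1)<\psi'(x+1)$. Symmetrically, Theorem \ref{MT F_a0'} shows that for $a=a_0'$ the function $F_{a_0'}$ is strictly decreasing on $(0,\infty)$; since $F_{a_0'}'(0)=0$ by the defining relation of $a_0'$ and $F_{a_0'}'$ is strictly monotone (Case 2 of that proof shows $F_{a_0'}''<0$ near $0$), we obtain $F_{a_0'}'(x)=\psi'(x+1)-\mathcal{L}_x(x,a_0')<0$ for all $x>0$, which is the right-hand inequality.

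Next I would substitute $a=a_1$ and $a=a_0'$ into the expression (\ref{L_x}) for $\mathcal{L}_x$ and simplify, producing the two explicit rational functions displayed just before the statement; this casts the double inequality into its announced closed form. The consistency of the sandwich is then automatic: since $a_1>a_0'$ and $a\mapsto\mathcal{L}_x(x,a)$ is decreasing on $(1/15,\infty)$ by Lemma \ref{Lemma prop. L}(ii), we have $\mathcal{L}_x(x,a_1)<\mathcal{L}_x(x,a_0')$, so the two bounds genuinely straddle $\psi'(x+1)$.

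For the optimality of the constants I would again invoke the two ``if and only if'' statements together with the monotonicity of $\mathcal{L}_x$ in $a$. Because $\mathcal{L}_x(x,a)$ decreases in $a$, the sharpest lower bound of the form $\mathcal{L}_x(x,a)$ corresponds to the smallest admissible $a$, and Corollary \ref{MC F_a1} identifies that admissible set as $[a_1,\infty)$: for $a<a_1$ the function $F_a$ fails to be increasing, so $\psi'(x+1)>\mathcal{L}_x(x,a)$ breaks down for some $x>0$. Hence $a_1$ cannot be lowered and is best on the left. Dually, the sharpest upper bound corresponds to the largest admissible $a$, which Theorem \ref{MT F_a0'} pins down as $a_0'$, since for $a>a_0'$ one has $F_a'(0)>0$ and the bound $\psi'(x+1)<\mathcal{L}_x(x,a)$ is violated near $x=0$; thus $a_0'$ is best on the right.

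I do not anticipate a serious obstacle, as the entire analytic burden has already been discharged in Corollary \ref{MC F_a1} and Theorem \ref{MT F_a0'}. The only points requiring care are the strictness of the inequalities at the two distinguished parameter values --- particularly at $x=0$ on the right, where $F_{a_0'}'(0)=0$ forces me to use the strict decrease of $F_{a_0'}$ rather than mere monotonicity --- and the algebraic bookkeeping that converts $\mathcal{L}_x(x,a_1)$ and $\mathcal{L}_x(x,a_0')$ into the stated rational forms.
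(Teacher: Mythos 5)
Your proposal is correct and follows essentially the same route as the paper: the paper derives the double inequality exactly by combining Corollary \ref{MC F_a1} (giving $F_{a_1}'>0$, i.e.\ the left inequality) with Theorem \ref{MT F_a0'} (giving $F_{a_0'}'<0$, i.e.\ the right inequality) and then substituting $a_1$ and $a_0'$ into (\ref{L_x}) to obtain the explicit rational forms. Your added discussion of strictness near $x=0$ and of optimality via the two ``if and only if'' statements and the monotonicity of $a\mapsto\mathcal{L}_x(x,a)$ simply makes explicit what the paper leaves implicit.
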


Similarly, from Corollary \ref{MC F_a1} and Theorem \ref{MT F_a0''} we obtain

\begin{corollary}
For $x>0$, the double inequality%
\begin{equation*}
\mathcal{L}_{xx}(x,a_{0}^{\prime \prime })<\psi ^{\prime \prime }\left(
x+1\right) <\mathcal{L}_{xx}(x,a_{1})
\end{equation*}%
holds with the best constants $a_{0}^{\prime \prime }\approx
0.4321803644583305$ and $a_{1}=\left( 40+3\sqrt{205}\right) /105\approx
0.79003$.

Particularly, taking $a=1/3<a_{0}^{\prime \prime }$, we have%
\begin{eqnarray*}
&&-\tfrac{9}{2}\tfrac{450x^{6}+1350x^{5}+1965x^{4}+1680x^{3}+897x^{2}+282x+38%
}{\left( 3x^{2}+3x+2\right) ^{2}\left( 15x^{2}+15x+4\right) ^{2}} \\
&<&\psi ^{\prime \prime }\left( x+1\right) <-\tfrac{5}{6}\tfrac{\left(
1470x^{6}+4410x^{5}+7875x^{4}+8400x^{3}+5863x^{2}+2398x+346\right) }{\left(
35x^{4}+70x^{3}+85x^{2}+50x+12\right) ^{2}}
\end{eqnarray*}
\end{corollary}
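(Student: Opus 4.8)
The plan is to recognise the claimed double inequality as nothing more than the two one-sided statements $F_{a_1}^{\prime\prime}(x)<0$ and $F_{a_0^{\prime\prime}}^{\prime\prime}(x)>0$ on $(0,\infty)$, both of which are already established above. Since $F_a^{\prime\prime}(x)=\psi^{\prime\prime}(x+1)-\mathcal{L}_{xx}(x,a)$, the upper estimate $\psi^{\prime\prime}(x+1)<\mathcal{L}_{xx}(x,a_1)$ is exactly $F_{a_1}^{\prime\prime}(x)<0$, while the lower estimate $\mathcal{L}_{xx}(x,a_0^{\prime\prime})<\psi^{\prime\prime}(x+1)$ is exactly $F_{a_0^{\prime\prime}}^{\prime\prime}(x)>0$.

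First I would dispose of the upper bound by taking $n=2$ in Theorem \ref{MT F_a1}: this gives $(-1)^2F_{a_1}^{\prime\prime}(x)=F_{a_1}^{\prime\prime}(x)<0$ for every $x>-1$, hence in particular for $x>0$, which is the right-hand inequality. For the left-hand inequality I would invoke the sufficiency half of Theorem \ref{MT F_a0''} at the endpoint $a=a_0^{\prime\prime}$, where it is proved that $a\in(1/15,a_0^{\prime\prime}]$ forces $F_a^{\prime\prime}(x)>0$ throughout $(0,\infty)$; with $a=a_0^{\prime\prime}$ this reads $\mathcal{L}_{xx}(x,a_0^{\prime\prime})<\psi^{\prime\prime}(x+1)$. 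Concatenation yields the asserted strict double inequality.

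The optimality of the two constants then comes for free from the monotonicity statement in Lemma \ref{Lemma prop. L}(ii) that $a\mapsto\mathcal{L}_{xx}(x,a)$ is increasing. Because $\mathcal{L}_{xx}(x,a)$ grows with $a$, a smaller $a$ tightens the upper bound and a larger $a$ tightens the lower bound, so the best upper constant is the smallest $a$ with $F_a^{\prime\prime}\le 0$ on $(0,\infty)$ and the best lower constant is the largest $a$ with $F_a^{\prime\prime}\ge 0$. If some $a<a_1$ validated the upper bound then $F_a$ would be concave, contradicting the necessity direction of Corollary \ref{MC F_a1} (which forces $a\ge a_1$); and if some $a>a_0^{\prime\prime}$ validated the lower bound then $F_a$ would be convex, so $F_a^{\prime\prime}(0)\ge 0$ by continuity, contradicting the necessity direction of Theorem \ref{MT F_a0''} (which forces $a\le a_0^{\prime\prime}$). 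Hence $a_1$ and $a_0^{\prime\prime}$ are sharp.

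For the specialization I would use $1/3<a_0^{\prime\prime}$ and the same monotonicity to write $\mathcal{L}_{xx}(x,1/3)<\mathcal{L}_{xx}(x,a_0^{\prime\prime})<\psi^{\prime\prime}(x+1)$, and then merely substitute $a=1/3$ and $a=a_1$ into formula (\ref{L_xx}) and clear denominators. The only genuine computation in the whole corollary is this last step; using $x^2+x+2/3=(3x^2+3x+2)/3$ and $x^2+x+4/15=(15x^2+15x+4)/15$ for the case $a=1/3$, and reading the quartic denominator $35x^4+70x^3+85x^2+50x+12$ off the expression for $\mathcal{L}_x(x,a_1)$ already recorded above for the case $a=a_1$, one arrives at the two displayed rational functions. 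I anticipate no analytic obstacle: the corollary is a direct harvest of Theorem \ref{MT F_a1}, Corollary \ref{MC F_a1} and Theorem \ref{MT F_a0''}, and the residual labor is routine polynomial bookkeeping.
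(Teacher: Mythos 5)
Your proposal is correct and follows essentially the same route as the paper, which obtains this corollary directly by combining Theorem \ref{MT F_a1} (equivalently Corollary \ref{MC F_a1}) for the upper bound, Theorem \ref{MT F_a0''} for the lower bound, the necessity directions of those results for sharpness, and Lemma \ref{Lemma prop. L}(ii) plus substitution into (\ref{L_xx}) for the particular case $a=1/3$. The only cosmetic nuance is that Corollary \ref{MC F_a1} states its necessity for ``increasing and concave'' jointly, but its proof derives $a\geq a_{1}$ from the condition $\lim_{x\rightarrow \infty }x^{8}F_{a}^{\prime \prime }\left( x\right) \leq 0$ alone, so your appeal to concavity by itself is justified by that proof.
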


\section{Sharp bounds for Psi function and the harmonic number}

\begin{theorem}
\label{MT Psi<L}Let the function $x\rightarrow \mathcal{L}(x,a)$ be defined
on $\left( -1,\infty \right) $ by (\ref{L(x,a)}) and $a\in \left(
4/15,\infty \right) $. Then inequality 
\begin{equation}
\psi \left( x+1\right) <\mathcal{L}\left( x,a\right)  \label{Psi<L}
\end{equation}%
holds for all $x\in \left( -1,\infty \right) $ if and only if $a\geq
a_{1}=\left( 40+3\sqrt{205}\right) /105\approx 0.79003$.

Moreover, for $x>0$ we have%
\begin{equation}
\mathcal{L}\left( x,a\right) -c_{0}\left( a\right) <\psi \left( x+1\right) <%
\mathcal{L}\left( x,a\right) ,  \label{PB-lr1}
\end{equation}%
where%
\begin{equation}
c_{0}\left( a\right) =\mathcal{L}\left( 0,a\right) +\gamma =\tfrac{1}{%
90a^{2}+2}\ln \tfrac{3a+1}{3}+\tfrac{45a^{2}}{90a^{2}+2}\ln \tfrac{15a-1}{45a%
}+\gamma  \label{c_0}
\end{equation}%
is the best constant, and the lower bound $\mathcal{L}\left( x,a\right)
-c_{0}\left( a\right) $ and upper bound $\mathcal{L}\left( x,a\right) $ are
decreasing and increasing on $\left( a_{1},\infty \right) $, respectively.
\end{theorem}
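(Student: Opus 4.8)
The plan is to prove the three assertions in Theorem \ref{MT Psi<L} by leveraging the monotonicity and convexity established in the preceding theorems. For the first assertion, the sufficiency of $a \geq a_{1}$ follows almost immediately from Corollary \ref{MC F_a1} (and Theorem \ref{MT F_a1}): since $F_{a}$ is increasing on $(-1,\infty)$ for $a = a_{1}$, combined with the limit relation $\lim_{x\to\infty}(\psi(x+1) - \mathcal{L}(x,a)) = 0$ from (\ref{1.4}), we deduce $F_{a_{1}}(x) = \psi(x+1) - \mathcal{L}(x,a_{1}) < 0$, which is exactly (\ref{Psi<L}) at $a = a_{1}$. By Lemma \ref{Lemma prop. L}(i), $\mathcal{L}(x,a)$ is increasing in $a$, so for any $a \geq a_{1}$ we have $\psi(x+1) < \mathcal{L}(x,a_{1}) \leq \mathcal{L}(x,a)$. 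For the necessity, I would examine the rate at which $F_{a}(x)\to 0$ as $x\to\infty$: the asymptotic relation (\ref{1.3b}) shows $F_{a}(x) \sim -\tfrac{(a - a_{1})(a - a_{2})}{85050a}\,x^{-6}$ where $a_{2} = (40 - 3\sqrt{205})/105 < 0$. For $a \in (4/15, a_{1})$ the leading coefficient becomes positive (since $a - a_{1} < 0$ and $a - a_{2} > 0$), forcing $F_{a}(x) > 0$ for large $x$, which violates (\ref{Psi<L}). Thus $a \geq a_{1}$ is necessary.

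For the second assertion (\ref{PB-lr1}), the upper bound $\psi(x+1) < \mathcal{L}(x,a)$ is already in hand from the first part. For the lower bound, the key observation is that $F_{a}(x) = \psi(x+1) - \mathcal{L}(x,a)$ is increasing on $(0,\infty)$ for $a \geq a_{1}$ by Corollary \ref{MC F_a1}. Hence on $(0,\infty)$ we have
\begin{equation*}
F_{a}(x) > \lim_{x\to 0^{+}} F_{a}(x) = \psi(1) - \mathcal{L}(0,a) = -\gamma - \mathcal{L}(0,a) = -c_{0}(a),
\end{equation*}
using $\psi(1) = -\gamma$ and the definition (\ref{c_0}). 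This rearranges to $\psi(x+1) > \mathcal{L}(x,a) - c_{0}(a)$, giving the lower bound. The sharpness (best constant) of $c_{0}(a)$ is witnessed precisely by the limiting value at $x \to 0^{+}$: because $F_{a}$ is strictly increasing, the infimum of $F_{a}$ on $(0,\infty)$ equals $-c_{0}(a)$ and is approached but not attained, so no larger constant can be subtracted.

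For the third assertion, I would analyze the monotonicity in $a$ of the two bounds directly. The upper bound $\mathcal{L}(x,a)$ is increasing in $a$ by Lemma \ref{Lemma prop. L}(i), settling that half. For the lower bound $\mathcal{L}(x,a) - c_{0}(a)$, note that this equals $\mathcal{L}(x,a) - \mathcal{L}(0,a) - \gamma$; by the first function listed in Remark \ref{Remark D} (taking $y = 0$), the map $a \mapsto \mathcal{L}(x,a) - \mathcal{L}(0,a)$ is decreasing on $(1/15,\infty)$ for $x > 0$, hence so is the lower bound after subtracting the constant $\gamma$. The main obstacle I anticipate is cleanly justifying the necessity direction via the asymptotic expansion: one must verify that the sign of the $x^{-6}$ coefficient genuinely controls the sign of $F_{a}(x)$ for large $x$ (rather than being cancelled by a dominant higher-order effect), which requires confirming that this coefficient is nonzero for $a \neq a_{1}$ in the admissible range — and since $a_{2}<0$ lies outside $(4/15,\infty)$, the factor $(a-a_{2})$ never vanishes there, so the coefficient vanishes only at $a = a_{1}$, making the argument clean.
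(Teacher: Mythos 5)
Your proposal is correct and follows essentially the same route as the paper's own proof: sufficiency via Theorem \ref{MT F_a1} (monotonicity of $F_{a_{1}}$ plus the limit (\ref{1.4})) combined with Lemma \ref{Lemma prop. L}(i), necessity via the asymptotic (\ref{1.3b}), the bounds (\ref{PB-lr1}) from the monotonicity of $F_{a}$ together with $F_{a}(0)=-c_{0}(a)$ and $F_{a}(\infty)=0$, and the parameter-monotonicity of the bounds from Lemma \ref{Lemma prop. L} and Remark \ref{Remark D}. Your write-up even spells out details the paper leaves implicit, such as why the factor $(a-a_{2})$ with $a_{2}=(40-3\sqrt{205})/105<0$ cannot vanish on $(4/15,\infty)$.
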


\begin{proof}
\textbf{Necessity}. If (\ref{Psi<L}) holds, that is, $F_{a}\left( x\right)
=\psi \left( x+1\right) -\mathcal{L}\left( x,a\right) <0$, then by (\ref%
{1.3b}) we have%
\begin{equation*}
\lim_{x\rightarrow \infty }\frac{F_{a}\left( x\right) }{x^{-6}}=-\tfrac{1}{%
85050a}\left( a-\tfrac{40+3\sqrt{205}}{105}\right) \left( a-\tfrac{40-3\sqrt{%
205}}{105}\right) \leq 0.
\end{equation*}%
Solving the inequality for $a$ and noting that $a\in \left( 4/15,\infty
\right) $ yield 
\begin{equation*}
a\geq \tfrac{40+3\sqrt{205}}{105}=a_{1},
\end{equation*}%
which shows that the condition $a\geq a_{1}$ is necessary.

\textbf{Sufficiency}. Suppose that $a\geq a_{1}$. By Theorem \ref{MT F_a1},
it is deduced that 
\begin{equation*}
F_{a_{1}}\left( x\right) =\psi \left( x+1\right) -\mathcal{L}\left(
x,a_{1}\right) <0,
\end{equation*}%
that is, $\psi \left( x+1\right) <\mathcal{L}\left( x,a_{1}\right) $ holds
for all $x\in \left( -1,\infty \right) $. Since the function $a\rightarrow 
\mathcal{L}\left( x,a\right) $ is increasing on $\left( 4/15,\infty \right) $
by Lemma \ref{Lemma prop. L}, it is easy to conclude that for $a\geq a_{1}$,%
\begin{equation*}
\psi \left( x+1\right) <\mathcal{L}\left( x,a_{1}\right) \leq \mathcal{L}%
\left( x,a\right)
\end{equation*}%
holds for all $x\in \left( -1,\infty \right) $, which means that the
condition $a\geq a_{0}$ is sufficient.

Using the monotonicity of $F_{a}\left( x\right) $ and the facts $F_{a}\left(
0\right) =-\gamma -\mathcal{L}\left( 0,a\right) $ and $F_{a}\left( \infty
\right) =0$ gives \ref{PB-lr1}, and the monotonicity of the lower and upper
bounds in $a$ follows from Lemma \ref{Lemma prop. L} and Remark \ref{Remark
D}.

This completes the proof.
\end{proof}

Letting $a=4/5$, $1$, $\infty $ in Theorem \ref{MT Psi<L} we have

\begin{corollary}
The following double inequalities%
\begin{eqnarray*}
&&\frac{5}{298}\ln \left( x^{2}+x+\frac{17}{15}\right) +\frac{72}{149}\ln
\left( x^{2}+x+\frac{11}{36}\right) -c_{0}\left( 4/5\right) \\
&<&\psi \left( x+1\right) <\frac{5}{298}\ln \left( x^{2}+x+\frac{17}{15}%
\right) +\frac{72}{149}\ln \left( x^{2}+x+\frac{11}{36}\right) ,
\end{eqnarray*}%
\begin{eqnarray*}
&&\frac{1}{92}\ln \left( x^{2}+x+\frac{4}{3}\right) +\frac{45}{92}\ln \left(
x^{2}+x+\frac{14}{45}\right) -c_{0}\left( 1\right) \\
&<&\psi \left( x+1\right) <\frac{1}{92}\ln \left( x^{2}+x+\frac{4}{3}\right)
+\frac{45}{92}\ln \left( x^{2}+x+\frac{14}{45}\right) ,
\end{eqnarray*}%
\begin{equation}
\tfrac{1}{2}\ln \left( x^{2}+x+\tfrac{1}{3}\right) -c_{0}\left( \infty
\right) <\psi \left( x+1\right) <\tfrac{1}{2}\ln \left( x^{2}+x+\tfrac{1}{3}%
\right) ,  \label{Yang1}
\end{equation}%
hold true for $x>0$, where 
\begin{eqnarray*}
c_{0}\left( 4/5\right) &=&\gamma +\frac{5}{298}\ln \frac{17}{15}+\frac{72}{%
149}\ln \frac{11}{36}\approx 0.0063957, \\
c_{0}\left( 1\right) &=&\gamma +\frac{1}{92}\ln \frac{4}{3}+\frac{45}{92}\ln 
\frac{14}{45}\approx 0.0092314, \\
c_{0}\left( \infty \right) &=&\gamma +\lim_{a\rightarrow \infty }\left( 
\tfrac{1}{90a^{2}+2}\ln \tfrac{3a+1}{3}+\tfrac{45a^{2}}{90a^{2}+2}\ln \tfrac{%
15a-1}{45a}\right) =\gamma -\tfrac{1}{2}\ln 3\approx 0.027910
\end{eqnarray*}%
are the best constants.
\end{corollary}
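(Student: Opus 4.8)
The plan is to obtain all three double inequalities as direct specializations of the ``Moreover'' part of Theorem \ref{MT Psi<L}, which already supplies, for every admissible $a$, the sharp two-sided estimate
\[
\mathcal{L}(x,a)-c_{0}(a)<\psi(x+1)<\mathcal{L}(x,a)\qquad (x>0),
\]
together with the fact that $c_{0}(a)=\mathcal{L}(0,a)+\gamma$ is the best lower constant. For the first two cases the only thing to verify is admissibility: since $a_{1}\approx0.79003$, both $a=4/5=0.8$ and $a=1$ satisfy $a\ge a_{1}$, so (\ref{PB-lr1}) applies verbatim. The explicit coefficients then follow by substituting into the defining formula (\ref{L(x,a)}): for $a=4/5$ one computes $90a^{2}+2=298/5$, $45a^{2}=144/5$, $(3a+1)/3=17/15$, $(15a-1)/(45a)=11/36$, giving the weights $5/298$ and $72/149$; for $a=1$ one gets $90a^{2}+2=92$, $45a^{2}=45$, $(3a+1)/3=4/3$, $(15a-1)/(45a)=14/45$, giving the weights $1/92$ and $45/92$. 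The best lower constants $c_{0}(4/5)$ and $c_{0}(1)$ come from inserting the same quantities into (\ref{c_0}), and their decimal values follow by direct evaluation.

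The genuinely non-automatic case is $a=\infty$, where $a$ cannot be substituted directly; here I would pass to the limit using two monotonicity facts already in hand. By Lemma \ref{Lemma prop. L}(i), namely formula (\ref{2.0}), we have $\lim_{a\to\infty}\mathcal{L}(x,a)=\tfrac12\ln(x^{2}+x+\tfrac13)$, while Theorem \ref{MT Psi<L} asserts that on $(a_{1},\infty)$ the upper bound $\mathcal{L}(x,a)$ is increasing and the lower bound $\mathcal{L}(x,a)-c_{0}(a)$ is decreasing in $a$. For the upper estimate in (\ref{Yang1}) it then suffices to chain $\psi(x+1)<\mathcal{L}(x,a_{1})<\lim_{a\to\infty}\mathcal{L}(x,a)$, the middle strict inequality coming from Theorem \ref{MT F_a1} and the right one from strict monotonicity in $a$ (Lemma \ref{Lemma prop. L}); this recovers He's bound (\ref{1.2a}). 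For the lower estimate I would use that, because $\mathcal{L}(x,a)-c_{0}(a)$ is strictly decreasing in $a$, its limiting value $\tfrac12\ln(x^{2}+x+\tfrac13)-c_{0}(\infty)$ stays strictly below $\mathcal{L}(x,a)-c_{0}(a)<\psi(x+1)$ for any fixed finite $a>a_{1}$, which yields the strict limiting lower bound. Computing $c_{0}(\infty)=\gamma+\lim_{a\to\infty}\mathcal{L}(0,a)=\gamma-\tfrac12\ln 3$ then fixes the constant.

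The main obstacle, which is mild, is ensuring that strictness survives the passage to the limit at $a=\infty$; this is precisely why I route through a fixed finite $a>a_{1}$ rather than evaluating the limiting inequality directly, so that the strict monotonicity of the bounds (ultimately from Lemma \ref{Lemma prop. L} and Remark \ref{Remark D}) does the work. The sharpness claims need no new argument: each $c_{0}(a)$ is attained in the limit $x\to0^{+}$, where $\psi(1)=-\gamma$ forces $\psi(x+1)-\mathcal{L}(x,a)\to-c_{0}(a)$, and the same boundary evaluation gives sharpness at $a=\infty$, since $\tfrac12\ln(x^{2}+x+\tfrac13)-c_{0}(\infty)\to-\gamma=\psi(1)$ as $x\to0^{+}$. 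All that remains is routine arithmetic verification of the displayed coefficients and of the decimal approximations of $c_{0}(4/5)$, $c_{0}(1)$, and $c_{0}(\infty)$.
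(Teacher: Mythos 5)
Your proposal is correct and follows essentially the same route as the paper: the paper derives this corollary in one line by letting $a=4/5$, $1$, $\infty$ in Theorem \ref{MT Psi<L} (inequality (\ref{PB-lr1}) with the best constant $c_{0}(a)$), exactly as you do. Your extra care in passing to the limit $a\to\infty$ via the strict monotonicity of the bounds in $a$ (Lemma \ref{Lemma prop. L} and Remark \ref{Remark D}) is a sound elaboration of what the paper leaves implicit, not a different method.
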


\begin{remark}
We easily check that the lower bound in (\ref{Yang1}) is weaker than one in (%
\ref{1.2a}).
\end{remark}

By the relation $\psi \left( n+1\right) =H_{n}-\gamma $ and the fact $%
F_{a}\left( 1\right) =1-\gamma -\mathcal{L}\left( 1,a\right) $, the
inequalities \ref{PB-lr1} can be changed into

\begin{corollary}
\label{Corollary H_na1}Let $\mathcal{L}(x,a)$ be defined by (\ref{L(x,a)})
and $a\geq a_{1}=\left( 40+3\sqrt{205}\right) /105$. Then for all $n\in 
\mathbb{N}$ we have%
\begin{equation}
\mathcal{L}\left( n,a\right) +c_{1}\left( a\right) <H_{n}<\mathcal{L}\left(
n,a\right) +\gamma ,  \label{Inequal.H1}
\end{equation}%
where $c_{1}\left( a\right) =1-\mathcal{L}\left( 1,a\right) $ and $\gamma $
are the best possible. And, the lower bound $\mathcal{L}\left( n,a\right)
+c_{1}\left( a\right) $ and upper bound $\mathcal{L}\left( n,a\right)
+\gamma $ are decreasing and increasing on $\left( a_{1},\infty \right) $,
respectively.
\end{corollary}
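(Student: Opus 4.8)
The plan is to deduce (\ref{Inequal.H1}) from the continuous estimate of Theorem \ref{MT Psi<L} purely by exploiting the monotonicity of $F_{a}$, rather than by re-running any analytic bounds on $\psi$. The first step is to record the identity
\[
l_{n}(a)=H_{n}-\mathcal{L}(n,a)=\gamma +\psi (n+1)-\mathcal{L}(n,a)=\gamma +F_{a}(n),
\]
which follows immediately from $H_{n}=\gamma +\psi (n+1)$. This reduces the whole statement to understanding the values of $F_{a}$ at positive integers.

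Next I would invoke Corollary \ref{MC F_a1}: for $a\geq a_{1}$ the function $F_{a}$ is increasing on $(0,\infty )$ with $\lim_{x\to \infty }F_{a}(x)=0$. Restricting to integers $n\geq 1$ this gives $F_{a}(1)\leq F_{a}(n)<0$, where the left inequality is an equality precisely at $n=1$ and is strict for $n\geq 2$ (by strict monotonicity), while the right inequality is strict throughout. Adding $\gamma$ and using $\psi (2)=1-\gamma$, so that $F_{a}(1)=(1-\gamma )-\mathcal{L}(1,a)$ and hence $\gamma +F_{a}(1)=1-\mathcal{L}(1,a)=c_{1}(a)$, yields
\[
\mathcal{L}(n,a)+c_{1}(a)\leq H_{n}<\mathcal{L}(n,a)+\gamma ,
\]
which is (\ref{Inequal.H1}) (the genuine equality at $n=1$ is the tightness that forces the constant $c_{1}(a)$ to be best possible). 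Sharpness of both constants comes from the two limiting regimes already in hand: the lower constant $c_{1}(a)$ is attained at $n=1$ and so cannot be enlarged, while $H_{n}-\mathcal{L}(n,a)=\gamma +F_{a}(n)\to \gamma$ as $n\to \infty$ shows that $\gamma$ cannot be reduced.

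For the monotonicity of the bounds in $a$ I would treat the two sides separately. The upper bound $\mathcal{L}(n,a)+\gamma$ is increasing in $a$ directly from Lemma \ref{Lemma prop. L}(i), since $\mathcal{L}(x,\cdot )$ is increasing on $(4/15,\infty )$ for every $x>-1$. For the lower bound I would write $\mathcal{L}(n,a)+c_{1}(a)=1+\bigl(\mathcal{L}(n,a)-\mathcal{L}(1,a)\bigr)$ and apply Remark \ref{Remark D} with $x=n$ and $y=1$: for $n\geq 2$ the map $a\mapsto \mathcal{L}(n,a)-\mathcal{L}(1,a)$ is decreasing on $(1/15,\infty )$, and hence so is the lower bound (it is constant, equal to $1=H_{1}$, for $n=1$).

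The only genuinely delicate point here is conceptual rather than computational: one must notice that passing from real $x$ to integers $n\geq 1$ replaces the continuous extremal regime $x\to 0^{+}$ (which produced the constant $c_{0}(a)$ in Theorem \ref{MT Psi<L}) by the left endpoint $n=1$, and that the monotonicity of $F_{a}$ supplied by Corollary \ref{MC F_a1}---valid exactly when $a\geq a_{1}$---is what singles out $F_{a}(1)$ as the correct extremal value. Everything else is bookkeeping, and no estimate of $\psi$ or $\mathcal{L}$ beyond the identity $\psi (2)=1-\gamma$ is needed.
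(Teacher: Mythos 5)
Your proposal is correct and takes essentially the same route as the paper: both arguments rest on the strict monotonicity of $F_{a}$ for $a\geq a_{1}$ (Corollary \ref{MC F_a1}) together with $\lim_{x\rightarrow \infty }F_{a}(x)=0$, the relation $H_{n}=\gamma +\psi (n+1)$, the evaluation $\gamma +F_{a}(1)=1-\mathcal{L}(1,a)=c_{1}(a)$ replacing the continuous endpoint value $F_{a}(0)$ of Theorem \ref{MT Psi<L}, and Lemma \ref{Lemma prop. L} with Remark \ref{Remark D} for the monotonicity of the two bounds in $a$. Your one refinement --- that the lower inequality is an equality at $n=1$, so the left-hand bound holds as ``$\leq $'' and this attainment is exactly what makes $c_{1}(a)$ best possible --- is a correct sharpening of the paper's slightly loose strict inequality.
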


\begin{theorem}
\label{MT Psi>L}Let the function $x\rightarrow \mathcal{L}(x,a)$ be defined
on $(0,\infty )$ by (\ref{L(x,a)}). Then inequality 
\begin{equation}
\psi \left( x+1\right) >\mathcal{L}\left( x,a\right)  \label{Psi>L}
\end{equation}%
holds for all $x>0$ if and only if $a\in (1/15,a_{0}]$, where $a_{0}\approx
0.512967071402$ is the unique root of the equation $F_{a}\left( 0\right)
=\psi \left( 1\right) -\mathcal{L}(0,a)=0$ on $(1/15,\infty )$.
\end{theorem}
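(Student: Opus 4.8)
The plan is to prove the two implications separately, using throughout that $\mathcal{L}$ and its $x$-derivatives are monotone in the parameter $a$ (Lemma \ref{Lemma prop. L}), so that the whole family is governed by the single endpoint value $a=a_{0}$. Since $\psi (1)=-\gamma $, we have $F_{a}(0)=-\gamma -\mathcal{L}(0,a)$. The map $a\mapsto \mathcal{L}(0,a)$ is strictly increasing on $(1/15,\infty )$ (this is the $x=0$ instance of Lemma \ref{Lemma prop. L}, whose argument via $\partial ^{2}\mathcal{L}/\partial x\partial a<0$ is valid for $x\geq -1/2$), so $a\mapsto F_{a}(0)$ is strictly decreasing. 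As $a\to 1/15^{+}$ the factor $(15a-1)/(45a)\to 0^{+}$ forces $\mathcal{L}(0,a)\to -\infty $, while $\mathcal{L}(0,a)\to -\tfrac{1}{2}\ln 3>-\gamma $ as $a\to \infty $; hence $F_{a}(0)$ decreases strictly from $+\infty $ to $-\gamma +\tfrac{1}{2}\ln 3<0$, which gives the unique root $a_{0}\in (1/15,\infty )$ and shows $F_{a}(0)\geq 0$ exactly for $a\in (1/15,a_{0}]$. For \textbf{necessity}, if $\psi (x+1)>\mathcal{L}(x,a)$ holds for every $x>0$, then continuity at $0$ yields $F_{a}(0)=\lim_{x\to 0^{+}}F_{a}(x)\geq 0$, whence $a\in (1/15,a_{0}]$.

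For \textbf{sufficiency}, fix $a\in (1/15,a_{0}]$. Because $x\mapsto \mathcal{L}(x,a)$ is increasing in $a$ for $x>0$, we have $F_{a}(x)\geq F_{a_{0}}(x)$, so it suffices to prove $F_{a_{0}}(x)>0$ for all $x>0$, given $F_{a_{0}}(0)=0$ and $F_{a_{0}}(\infty )=0$. I would imitate the two-region scheme of Theorem \ref{MT F_a0'}. From the recursion \ref{3.1}, for $a=a_{0}<a_{1}$ the polynomial $q(x,a_{0})$ in \ref{3.2} has the form $A(x+1)^{2}-B$ with $A,B>0$, hence is increasing in $x$ and positive beyond its unique root $x^{\ast }\approx 0.15$; fix an explicit rational $c>x^{\ast }$. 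In \emph{Case 1} ($x\geq c$): using that $q(\cdot ,a)$ is decreasing in $a$ (inequality \ref{dq/da<0}) together with a clean bound $a_{0}<\bar{a}$ (a nearby rational, verified from $F_{\bar{a}}(0)<0$), one gets $q(x,a_{0})>q(x,\bar{a})\geq q(c,\bar{a})>0$ for $x\geq c$, a purely rational check. Then \ref{3.1} gives $F_{a_{0}}^{\prime }(x+1)-F_{a_{0}}^{\prime }(x)>0$ on $[c,\infty )$, so Lemma \ref{Lemma Dfx} (with $F_{a_{0}}^{\prime }(\infty )=0$) yields $F_{a_{0}}^{\prime }(x)<0$ there. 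Thus $F_{a_{0}}$ decreases to its limit $0$ on $[c,\infty )$, so $F_{a_{0}}(x)>0$ for $x\geq c$; in particular $F_{a_{0}}(c)>0$.

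In \emph{Case 2} ($0<x<c$) I would show $F_{a_{0}}$ is concave on $[0,c]$, i.e. $F_{a_{0}}^{\prime \prime }(x)<0$. Bounding $\psi ^{\prime \prime }(x+1)$ from above by replacing $x$ by $x+3/2$ in \ref{Inequal.2b} and invoking \ref{2.2}, and bounding $\mathcal{L}_{xx}(x,a_{0})$ from below by $\mathcal{L}_{xx}(x,\underline{a})$ for a clean rational $\underline{a}<a_{0}$ (legitimate since $\mathcal{L}_{xx}$ is increasing in $a$ by Lemma \ref{Lemma prop. L}), the inequality $F_{a_{0}}^{\prime \prime }<0$ reduces to the negativity of an explicit polynomial on $(0,c)$, to be dispatched by the monotonicity-plus-endpoint argument already used in Theorem \ref{MT F_a0'}. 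With $F_{a_{0}}$ concave on $[0,c]$, $F_{a_{0}}(0)=0$ and $F_{a_{0}}(c)>0$ from Case 1, the chord estimate gives $F_{a_{0}}(x)\geq \tfrac{x}{c}F_{a_{0}}(c)>0$ on $(0,c]$, completing the proof.

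The \textbf{main obstacle} is the joint calibration of the split point $c$: it must exceed the root $x^{\ast }$ of $q(\cdot ,a_{0})$ so that Case 1 produces $F_{a_{0}}^{\prime }<0$ on $[c,\infty )$, yet be small enough that $F_{a_{0}}$ remains concave on $[0,c]$ for Case 2, since $F_{a_{0}}$ has an inflection not far beyond $x^{\ast }$ (recall $a_{0}>a_{0}^{\prime \prime }$, so by Theorem \ref{MT F_a0''} the function $F_{a_{0}}$ is not globally convex). Verifying $F_{a_{0}}^{\prime \prime }<0$ on $(0,c)$ is the computational heart, and the transcendental nature of $a_{0}$ (it is defined through $\gamma $) is handled throughout by sandwiching $\underline{a}<a_{0}<\bar{a}$ with nearby rationals, so that every sign verification becomes a finite rational computation.
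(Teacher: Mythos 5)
Your proposal is correct and matches the paper's proof in all essentials: the same necessity argument from $F_{a}(0)\geq 0$ together with the monotonicity of $a\mapsto \mathcal{L}(0,a)$, the same reduction to the endpoint $a=a_{0}$ via Lemma \ref{Lemma prop. L}, the same Case 1 on $[c,\infty )$ using the recursion (\ref{3.1}), the monotonicity (\ref{dq/da<0}) with a rational $\bar{a}>a_{0}$, and Lemma \ref{Lemma Dfx} (the paper takes $c=1/5$ and $\bar{a}=11/21$), and the same Case 2 bound $F_{a_{0}}^{\prime \prime }<0$ near the origin obtained from the shifted inequality (\ref{Inequal.2b}) and $\mathcal{L}_{xx}(x,1/2)$. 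The only (minor, and if anything cleaner) deviation is your endgame in Case 2: you conclude by the concavity chord estimate $F_{a_{0}}(x)\geq \tfrac{x}{c}F_{a_{0}}(c)>0$ using $F_{a_{0}}(0)=0$ and $F_{a_{0}}(c)>0$, whereas the paper additionally verifies $F_{a_{0}}^{\prime }(0)>0$ to locate a unique interior maximum $x_{0}\in (0,1/5)$ and then argues positivity separately on $(0,x_{0})$ and $[x_{0},\infty )$ from $F_{a_{0}}(0)=0$ and $F_{a_{0}}(\infty )=0$.
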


\begin{proof}
\textbf{Necessity}. The necessity can be derived from $F_{a}\left( 0\right)
=\psi \left( 1\right) -\mathcal{L}(0,a)\geq 0$. Lemma \ref{Lemma prop. L}
shows that the function $\mathcal{L}$ is increasing with $a$ on $\left(
1/15,\infty \right) $, which implies that the function $a\rightarrow
F_{a}\left( 0\right) =\psi (1)-\mathcal{L}(0,a)$ is decreasing on $\left(
1/15,\infty \right) $. Straightforward computations yield%
\begin{equation*}
F_{1/2}\left( 0\right) =4.004\,3\times 10^{-4}>0\text{ \ and \ }%
F_{3/5}\left( 0\right) =-2.372\,7\times 10^{-3}<0,
\end{equation*}%
which reveals that there is a unique point $a_{0}\in \left( 1/15,3/5\right) $
satisfying $F_{a_{0}}\left( 0\right) =0$ such that $F_{a}\left( 0\right) >0$
for $a\in \left( 1/15,a_{0}\right) $ and $F_{a}\left( 0\right) <0$ for $a\in
\left( a_{0},\infty \right) $.

Numerical calculation gives $a_{0}\approx 0.512967071402$, which shows the
necessity.

\textbf{Sufficiency}. By part two of Lemma \ref{Lemma prop. L}, to prove
sufficiency, it is enough to show that%
\begin{equation*}
F_{a_{0}}\left( x\right) =\psi \left( x+1\right) -\mathcal{L}\left(
x,a_{0}\right) \geq 0
\end{equation*}%
holds for all $x>0$. Now we prove it stepwise.

(i) First of all, we prove that $F_{a_{0}}^{\prime }\left( x\right) <0$ for $%
x\geq 1/5$. Since $\lim_{x\rightarrow \infty }F_{a_{0}}^{\prime }\left(
x\right) =0$, from Lemma \ref{Lemma Dfx}, it suffices to show that 
\begin{equation*}
F_{a_{0}}^{\prime }\left( x+1\right) -F_{a_{0}}^{\prime }\left( x\right) =%
\tfrac{q(x,a_{0})}{p(x,a_{0})}>0,
\end{equation*}%
where $q\left( x,a\right) $, $p\left( x,a\right) $ are defined by (\ref{3.2}%
) and (\ref{3.3}), respectively.

Since function $a\rightarrow q(x,a)$ is decreasing on $\left( 1/15,\infty
\right) $ by (\ref{dq/da<0}) and $a_{0}<11/21$, for $x\geq 1/5$, we get%
\begin{eqnarray*}
q(x,a_{0}) &>&q(x,\tfrac{11}{21})=\left[ \tfrac{-315a^{2}+240a+7}{2025a}%
\left( x+1\right) ^{2}-\tfrac{\left( a+1/3\right) ^{2}\left( a-1/15\right)
^{2}}{9a^{2}}\right] _{a=11/21} \\
&=&\tfrac{12}{275}\left( x+1\right) ^{2}-\tfrac{9216}{148\,225}\geq \tfrac{12%
}{275}\left( 1/5+1\right) ^{2}-\tfrac{9216}{148\,225}=\tfrac{2448}{3705\,625}%
>0,
\end{eqnarray*}%
which together with $p(x,a_{0})>0$ yields $F_{a_{0}}^{\prime }\left(
x+1\right) -F_{a_{0}}^{\prime }\left( x\right) >0$. By Lemma \ref{Lemma Dfx}%
, we have $F_{a_{0}}^{\prime }\left( x\right) <0$ for $x\geq 1/5$.

(ii) Secondly, we show that there is a point $x_{0}\in \left( 0,1/5\right) $
such that $F_{a_{0}}^{\prime }\left( x_{0}\right) =0$, and $%
F_{a_{0}}^{\prime }\left( x\right) >0$ if $x\in (0,x_{0})$ and $%
F_{a_{0}}^{\prime }\left( x\right) <0$ if $x\in (x_{0},1/5)$. For this
purpose, it suffice to prove that $F_{a_{0}}^{\prime }$ is decreasing on $%
\left( 0,1/5\right) $ and $F_{a_{0}}^{\prime }\left( 0\right) >0$.

Replacing $x$ by $x+3/2$ in (\ref{Inequal.2b}) and using (\ref{2.2}), we
have 
\begin{equation}
\psi ^{\prime \prime }\left( x+1\right) <-\tfrac{20}{3}\tfrac{47040\left(
x+3/2\right) ^{6}+75600\left( x+3/2\right) ^{4}+30116\left( x+3/2\right)
^{2}-1917}{\left( 560\left( x+3/2\right) ^{4}+520\left( x+3/2\right)
^{2}+27\right) ^{2}}-\tfrac{2}{\left( x+1\right) ^{3}}.  \label{3.4}
\end{equation}%
Since $a\mapsto \mathcal{L}_{xx}\left( x,a_{0}\right) $ is increasing by
Lemma \ref{Lemma prop. L} and $a_{0}>1/2$, we get 
\begin{equation}
\mathcal{L}_{xx}\left( x,a_{0}\right) >\mathcal{L}_{xx}\left( x,\tfrac{1}{2}%
\right) =-\tfrac{4}{49}\tfrac{x^{2}+x-1/3}{\left( x^{2}+x+5/6\right) ^{2}}-%
\tfrac{45}{49}\tfrac{x^{2}+x+19/90}{\left( x^{2}+x+13/45\right) ^{2}},
\label{3.5}
\end{equation}%
where $\mathcal{L}_{xx}\left( x,a\right) $ is given by (\ref{L_xx}).
Utilizations of (\ref{3.4}) and (\ref{3.5}) yield that 
\begin{eqnarray*}
F_{a_{0}}^{\prime \prime }\left( x\right) &=&\psi ^{\prime \prime }\left(
x+1\right) -\mathcal{L}_{xx}\left( x,a_{0}\right) \\
&<&-\tfrac{20}{3}\tfrac{47040\left( x+1+1/2\right) ^{6}+75600\left(
x+1+1/2\right) ^{4}+30116\left( x+1+1/2\right) ^{2}-1917}{\left( 560\left(
x+1+1/2\right) ^{4}+520\left( x+1+1/2\right) ^{2}+27\right) ^{2}} \\
&&-\tfrac{2}{\left( x+1\right) ^{3}}-\mathcal{L}_{xx}\left( x,\tfrac{1}{2}%
\right) \\
&=&-\tfrac{20}{3}\tfrac{47040\left( x+1+1/2\right) ^{6}+75600\left(
x+1+1/2\right) ^{4}+30116\left( x+1+1/2\right) ^{2}-1917}{\left( 560\left(
x+1+1/2\right) ^{4}+520\left( x+1+1/2\right) ^{2}+27\right) ^{2}}-\tfrac{2}{%
\left( x+1\right) ^{3}} \\
&&+\tfrac{4}{49}\tfrac{x^{2}+x-1/3}{\left( x^{2}+x+5/6\right) ^{2}}+\tfrac{45%
}{49}\tfrac{x^{2}+x+19/90}{\left( x^{2}+x+13/45\right) ^{2}}.
\end{eqnarray*}%
Factoring and arranging lead to%
\begin{equation*}
F_{a_{0}}^{\prime \prime }\left( x\right) <\frac{32}{54675}\frac{v\left(
x\right) }{u\left( x\right) },
\end{equation*}%
where 
\begin{eqnarray*}
u(x) &=&\left( 560\left( x+\tfrac{3}{2}\right) ^{4}+520\left( x+\tfrac{3}{2}%
\right) ^{2}+27\right) ^{2}\left( x+1\right) ^{3} \\
&&\times \left( x^{2}+x+\tfrac{13}{45}\right) ^{2}\left( x^{2}+x+\tfrac{5}{6}%
\right) ^{2}>0,
\end{eqnarray*}%
\begin{eqnarray*}
v\left( x\right)
&=&25533900x^{11}+383008500x^{10}+2632741950x^{9}+10267850400x^{8} \\
&&+24970713315x^{7}+39608501505x^{6}+41428932346x^{5}+27795216042x^{4} \\
&&+10641326265x^{3}+1197017371x^{2}-633162120x-192808962.
\end{eqnarray*}%
A simple computation yields $v^{\prime \prime }\left( x\right) >0$ for $x>0$
and%
\begin{equation*}
v\left( 0\right) =-192808962<0\text{ \ and \ }v\left( 1/5\right) =-\frac{%
245738739045744}{1953125}<0,
\end{equation*}%
which by properties of convex functions reveals that for $x\in (0,1/5)$%
\begin{equation*}
v\left( x\right) <\left( 1-5x\right) \times v\left( 0\right) +5x\times
v\left( \tfrac{1}{5}\right) <0.
\end{equation*}%
Thus, $F_{a_{0}}^{\prime \prime }\left( x\right) <0$ for $x\in (0,1/5)$,
which means that $F_{a_{0}}^{\prime }\left( x\right) $ is decreasing on $%
(0,1/5)$.

On the other hand, note that $\mathcal{L}_{x}\left( x,a\right) $ decreases
with $a$ on $\left( 1/15,\infty \right) $ by Lemma \ref{Lemma prop. L} and $%
a_{0}>1/2$, it is derived that%
\begin{eqnarray*}
F_{a_{0}}^{\prime }\left( 0\right) &=&\psi ^{\prime }\left( 1\right) -%
\mathcal{L}_{x}\left( 0,a_{0}\right) >\psi ^{\prime }\left( 1\right) -%
\mathcal{L}_{x}\left( 0,\tfrac{1}{2}\right) \\
&=&\frac{\pi ^{2}}{6}-\frac{213}{130}\approx 6.472\,5\times 10^{-3}>0.
\end{eqnarray*}%
This together with the assertion that $F_{a_{0}}^{\prime }\left( x\right) <0$
for $x\geq 1/5$ proved previously implies that there is a unique point $%
x_{0}\in \left( 0,1/5\right) $ such that $F_{a_{0}}^{\prime }\left(
x_{0}\right) =0$, and $F_{a_{0}}^{\prime }\left( x\right) >0$ for $x\in
(0,x_{0})$ and $F_{a_{0}}^{\prime }\left( x\right) <0$ for $x\in (x_{0},1/5)$%
.

(iii) Finally, from (i) and (ii) we conclude that $F_{a_{0}}^{\prime }\left(
x\right) >0$ for $x\in \left( 0,x_{0}\right) $ and $F_{a_{0}}^{\prime
}\left( x\right) <0$ for $x\in \left( x_{0},\infty \right) $. It follows
that 
\begin{eqnarray*}
F_{a_{0}}\left( x_{0}\right) &>&F_{a_{0}}\left( x\right) >F_{a_{0}}\left(
0\right) =0\text{ for }x\in (0,x_{0}), \\
F_{a_{0}}\left( x_{0}\right) &\geq &F_{a_{0}}\left( x\right)
>\lim_{x\rightarrow \infty }F_{a_{0}}\left( x\right) =0\text{ for }x\in
\lbrack x_{0},\infty ),
\end{eqnarray*}%
where $F_{a_{0}}\left( 0\right) =0$ is due to $a_{0}$ is the unique root of
the equation $F_{a_{0}}\left( 0\right) =0$.

This completes the proof.
\end{proof}

\begin{remark}
From the proof previously we see that $F_{a_{0}}\left( x\right) $ has an
upper bound $F_{a_{0}}\left( x_{0}\right) $ in which $x_{0}$ is a unique
zero point of $F_{a_{0}}^{\prime }\left( x\right) $. Numerical computation
yields 
\begin{equation*}
x_{0}\approx 0.147311876217,\text{ \ \ \ }F_{a_{0}}\left( x_{0}\right)
\approx 0.0004651.
\end{equation*}%
It follows that 
\begin{equation}
\mathcal{L}\left( x,a_{0}\right) \leq \psi \left( x+1\right) \leq \mathcal{L}%
\left( x,a_{0}\right) +0.0004651  \label{Inequal.P3}
\end{equation}%
holds for all $x>0$.
\end{remark}

By the proof previously and Lemma \ref{Lemma prop. L}, we easily obtain that 
$F_{a}$ is decreasing on $[1/5,\infty )$ for $a\in \left( 1/15,a_{0}\right) $%
. It follows from the relation $\psi \left( n+1\right) =H_{n}-\gamma $ with
the fact $F_{a}\left( 1\right) =1-\gamma -\mathcal{L}\left( 1,a\right) $ that

\begin{corollary}
\label{Corollary H_na0}Let $\mathcal{L}(x,a)$ be defined by (\ref{L(x,a)})
and $a_{0}=0.512967071402...$. Then%
\begin{equation}
\mathcal{L}\left( n,a\right) +\gamma <H_{n}<\mathcal{L}\left( n,a\right)
+c_{1}\left( a\right) ,  \label{Inequal.H3}
\end{equation}%
holds for all $n\in \mathbb{N}$ and $a\in \left( 1/15,a_{0}\right) $, where $%
c_{1}\left( a\right) =1-\mathcal{L}\left( 1,a\right) $\ and $\gamma $ are
the best.

In particular, taking $a=1/2$, we have%
\begin{eqnarray*}
&&\frac{2}{49}\ln \left( n^{2}+n+\frac{5}{6}\right) +\frac{45}{98}\ln \left(
n^{2}+n+\frac{13}{45}\right) +\gamma \\
&<&H_{n}<\frac{2}{49}\ln \left( n^{2}+n+\frac{5}{6}\right) +\frac{45}{98}\ln
\left( n^{2}+n+\frac{13}{45}\right) +c_{1}\left( 1/2\right) ,
\end{eqnarray*}%
where%
\begin{equation*}
c_{1}\left( 1/2\right) =1-\frac{2}{49}\ln \frac{17}{6}-\ln \frac{103}{45}%
\approx 0.57726.
\end{equation*}
\end{corollary}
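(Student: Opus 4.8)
The plan is to reduce the two-sided estimate for $H_n$ to the sign and the monotonicity of the function $F_a$ studied in Theorem \ref{MT Psi>L}, via the classical identity $H_n = \gamma + \psi(n+1)$. Writing $F_a(n) = \psi(n+1) - \mathcal{L}(n,a) = H_n - \gamma - \mathcal{L}(n,a)$, the desired double inequality is equivalent to
\[
0 < F_a(n) \le F_a(1) = 1 - \gamma - \mathcal{L}(1,a) = c_1(a) - \gamma,
\]
where I have used $\psi(2) = \psi(1)+1 = 1-\gamma$. Thus everything follows once I know that $\left(F_a(n)\right)_{n\ge 1}$ is positive with maximum at $n=1$.

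First I would establish that $F_a$ is decreasing on $[1/5,\infty)$ for every $a \in (1/15,a_0)$. In step (i) of the proof of Theorem \ref{MT Psi>L} it was shown that $F_{a_0}'(x) < 0$ for $x \ge 1/5$. By part (ii) of Lemma \ref{Lemma prop. L}, the map $a \mapsto \mathcal{L}_x(x,a)$ is strictly decreasing on $(1/15,\infty)$ for $x>0$, so $a \mapsto F_a'(x) = \psi'(x+1) - \mathcal{L}_x(x,a)$ is strictly increasing; consequently $F_a'(x) < F_{a_0}'(x) < 0$ for $a < a_0$ and $x \ge 1/5$, which gives the claimed monotonicity.

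Since every $n \in \mathbb{N}$ satisfies $n \ge 1 > 1/5$, the monotonicity combined with $\lim_{x\to\infty} F_a(x) = 0$ yields $F_a(n) > \lim_{x\to\infty}F_a(x) = 0$, which is the lower estimate $H_n > \mathcal{L}(n,a) + \gamma$; moreover the sequence $\left(F_a(n)\right)$ is decreasing, so its maximum over $\mathbb{N}$ is attained at $n=1$, giving $F_a(n) \le F_a(1) = c_1(a)-\gamma$ and hence the upper estimate $H_n \le \mathcal{L}(n,a)+c_1(a)$ with equality precisely at $n=1$. The sharpness of both constants is then immediate: the lower constant $\gamma$ is the limit of $H_n - \mathcal{L}(n,a)$ as $n\to\infty$, so it cannot be enlarged, while the upper constant $c_1(a)$ is exactly the value $H_1 - \mathcal{L}(1,a)$, so it cannot be reduced. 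The concluding particular case is obtained by setting $a=1/2$, whereupon $90a^2+2 = 49/2$, $(3a+1)/3 = 5/6$ and $(15a-1)/(45a) = 13/45$ turn $\mathcal{L}(x,1/2)$ and $c_1(1/2) = 1 - \mathcal{L}(1,1/2)$ into the stated explicit quantities.

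I would not expect any serious obstacle here, since the corollary is essentially a bookkeeping consequence of Theorem \ref{MT Psi>L} and Lemma \ref{Lemma prop. L}. The only point demanding care is that the relevant monotonicity of $F_a'$ was proved directly only at the endpoint parameter $a_0$, so I must transfer it to the whole range $a \in (1/15,a_0)$ through the monotonicity of $\mathcal{L}_x$ in $a$, and I must record that the optimal upper bound is realized at the left endpoint $n=1$ of the integer domain rather than in the limit $n\to\infty$ that furnishes the lower bound.
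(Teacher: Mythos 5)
Your proposal is correct and follows essentially the same route as the paper: the paper likewise deduces that $F_a$ is decreasing on $[1/5,\infty)$ for $a\in(1/15,a_0)$ from step (i) of the proof of Theorem \ref{MT Psi>L} together with the monotonicity in $a$ from Lemma \ref{Lemma prop. L}, and then converts $0<F_a(n)\le F_a(1)$ into the stated bounds via $\psi(n+1)=H_n-\gamma$, with $\gamma$ sharp as the limit and $c_1(a)$ sharp as the value at $n=1$. Your explicit remark that equality occurs at $n=1$ (so the upper bound should really be ``$\le$'') is in fact slightly more careful than the paper's statement.
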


For $a\in (1/15,a_{0}^{\prime }]$, utilizing the decreasing property of $%
F_{a}$ on $\left( 0,\infty \right) $ together with the facts $F_{a}\left(
0\right) =-\gamma -\mathcal{L}\left( 0,a\right) $ and $F_{a}\left( \infty
\right) =0$, we have

\begin{corollary}
For $x>0$, the double inequality%
\begin{equation}
\mathcal{L}\left( x,a\right) <\psi \left( x+1\right) <\mathcal{L}\left(
x,a\right) -c_{0}\left( a\right) ,  \label{PB-lr2}
\end{equation}%
where $c_{0}\left( a\right) $ defined by (\ref{c_0}) is the best constant.
And, the lower bound $\mathcal{L}\left( x,a\right) $ and upper bound $%
\mathcal{L}\left( x,a\right) -c_{0}\left( a\right) $ are respectively
increasing and decreasing on $(1/15,a_{0}^{\prime }]$, where $a_{0}^{\prime
}\approx 0.47053$ is defined by (\ref{a0'}). Particularly, taking $%
a=1/3,4/15,\sqrt{5}/15,1/15^{+}$, we have%
\begin{eqnarray*}
&&\frac{1}{12}\ln \left( x^{2}+x+\frac{2}{3}\right) +\frac{5}{12}\ln \left(
x^{2}+x+\frac{4}{15}\right) \\
&<&\psi \left( x+1\right) <\frac{1}{12}\ln \left( x^{2}+x+\frac{2}{3}\right)
+\frac{5}{12}\ln \left( x^{2}+x+\frac{4}{15}\right) -c_{0}\left( 1/3\right) ,
\end{eqnarray*}%
\begin{eqnarray}
&&\frac{16}{21}\ln \left( x+\frac{1}{2}\right) +\frac{5}{42}\ln \left(
x^{2}+x+\frac{3}{5}\right)  \label{Yang2} \\
&<&\psi \left( x+1\right) <\frac{16}{21}\ln \left( x+\frac{1}{2}\right) +%
\frac{5}{42}\ln \left( x^{2}+x+\frac{3}{5}\right) -c_{0}\left( 4/15\right) ,
\notag
\end{eqnarray}%
\begin{eqnarray}
&&\frac{1}{4}\ln \left( \left( x^{2}+x+\frac{1}{3}\right) ^{2}-\frac{1}{45}%
\right)  \label{Yang3} \\
&<&\psi \left( x+1\right) <\frac{1}{4}\ln \left( \left( x^{2}+x+\frac{1}{3}%
\right) ^{2}-\frac{1}{45}\right) -c_{0}\left( \sqrt{5}/15\right) ,  \notag
\end{eqnarray}%
\begin{equation}
\psi \left( x+1\right) >\frac{1}{12}\ln \left( x^{2}+x\right) +\frac{5}{12}%
\ln \left( x^{2}+x+\frac{2}{5}\right) ,  \label{Yang4}
\end{equation}%
where%
\begin{eqnarray*}
c_{0}\left( 1/3\right) &=&\frac{1}{12}\ln \frac{2}{3}+\frac{5}{12}\ln \frac{4%
}{15}+\gamma \approx -0.0073047, \\
c_{0}\left( 4/15\right) &=&-\frac{8}{21}\ln 4+\frac{5}{42}\ln \frac{3}{5}%
+\gamma \approx -0.011709, \\
c_{0}\left( \sqrt{5}/15\right) &=&\frac{1}{4}\ln \frac{4}{45}+\gamma \approx
-0.027876.
\end{eqnarray*}
\end{corollary}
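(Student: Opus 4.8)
The plan is to read the double inequality (\ref{PB-lr2}) off directly from the monotonicity of $F_a$ supplied by Theorem \ref{MT F_a0'}, and then to dispatch sharpness and the $a$-monotonicity of the two bounds using Lemma \ref{Lemma prop. L} and Remark \ref{Remark D}. First I would fix $a\in(1/15,a_0']$. By Theorem \ref{MT F_a0'} the map $F_a(x)=\psi(x+1)-\mathcal{L}(x,a)$ is strictly decreasing on $(0,\infty)$. Since $F_a$ is smooth, hence continuous, at $0$, with boundary value $F_a(0)=\psi(1)-\mathcal{L}(0,a)=-\gamma-\mathcal{L}(0,a)=-c_0(a)$, and with $\lim_{x\to\infty}F_a(x)=0$ by (\ref{1.4}), monotonicity gives for every $x>0$
\[
0=\lim_{t\to\infty}F_a(t)<F_a(x)<F_a(0)=-c_0(a).
\]
Substituting $F_a(x)=\psi(x+1)-\mathcal{L}(x,a)$ yields exactly (\ref{PB-lr2}). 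The constant $c_0(a)$ is best possible: because $F_a$ is strictly decreasing with $F_a(x)\to -c_0(a)$ as $x\to 0^+$, no offset smaller than $-c_0(a)$ can serve in the upper bound, while the lower bound $\mathcal{L}(x,a)$ is sharp as $x\to\infty$.

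Next I would establish the monotonicity of the bounds in $a$. For the lower bound the argument proving Lemma \ref{Lemma prop. L}(i) applies for $x>0$ on the whole interval $(1/15,\infty)$: formula (\ref{L_ax}) gives $\partial^2\mathcal{L}/\partial x\,\partial a<0$ there, and since $\lim_{x\to\infty}\partial\mathcal{L}/\partial a=0$ this forces $\partial\mathcal{L}/\partial a>0$, so $a\mapsto\mathcal{L}(x,a)$ is increasing. For the upper bound I would write
\[
\mathcal{L}(x,a)-c_0(a)=\bigl(\mathcal{L}(x,a)-\mathcal{L}(0,a)\bigr)-\gamma=\int_0^x \mathcal{L}_x(t,a)\,dt-\gamma,
\]
and invoke Lemma \ref{Lemma prop. L}(ii), by which $\mathcal{L}_x(t,a)$ is decreasing in $a$ for each $t>0$; the integrand being decreasing in $a$, so is the integral, hence the upper bound decreases in $a$ on $(1/15,a_0']$. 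This integral representation is the clean substitute for the $y\to 0^+$ endpoint of Remark \ref{Remark D}, which is stated only for $x>y>0$.

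Finally I would verify the displayed special cases by substituting $a=1/3,\ 4/15,\ \sqrt5/15$ into (\ref{L(x,a)}) and simplifying: the weights $1/(90a^2+2)$ and $45a^2/(90a^2+2)$ together with the offsets $(3a+1)/3$ and $(15a-1)/(45a)$ reduce to rational (or surd) numbers. For $a=4/15$ the second argument collapses to the perfect square $(x+\tfrac12)^2$, giving (\ref{Yang3})'s companion, and for $a=\sqrt5/15$ both weights equal $1/4$, so the two logarithms merge into $\tfrac14\ln\bigl((x^2+x+\tfrac13)^2-\tfrac1{45}\bigr)$, which is (\ref{Yang3}). The choice $a=1/15^+$ is a boundary limit where $(15a-1)/(45a)\to 0$, hence $c_0(a)\to-\infty$ and the upper bound degenerates, leaving only the lower inequality (\ref{Yang4}). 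I expect the only genuinely delicate points to be this degenerate endpoint and confirming that the $a$-monotonicity of the bounds persists for parameters below $4/15$; the core inequality itself is an immediate corollary of Theorem \ref{MT F_a0'}.
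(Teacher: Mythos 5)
Your proposal is correct and follows essentially the same route as the paper: the paper obtains (\ref{PB-lr2}) in one line from the decreasing property of $F_{a}$ on $(0,\infty)$ given by Theorem \ref{MT F_a0'} together with the endpoint values $F_{a}(0)=-\gamma-\mathcal{L}(0,a)=-c_{0}(a)$ and $F_{a}(\infty)=0$, and gets the monotonicity of the two bounds in $a$ from Lemma \ref{Lemma prop. L} and Remark \ref{Remark D}, with the special cases read off by substitution. Your only departures are minor refinements of that same argument---patching the domain mismatch in Lemma \ref{Lemma prop. L}(i) (stated for $a>4/15$ but needed on $(1/15,a_{0}^{\prime}]$, which the sign of (\ref{L_ax}) indeed supplies for $x>0$) and replacing the $y\to0^{+}$ endpoint of Remark \ref{Remark D} by an equivalent integral representation of $\mathcal{L}(x,a)-\mathcal{L}(0,a)$.
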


\begin{remark}
The lower bound for $\psi \left( x+1\right) $ in (\ref{Yang2}) is clearly
stronger than one in (\ref{1.1a}).
\end{remark}

\section{Approximations of Euler's Constant}

The Euler's constant $\gamma $ defined by the limit relation 
\begin{equation*}
\gamma =\lim_{n\rightarrow \infty }\left( H_{n}-\ln n\right) =0.577215664...,
\end{equation*}%
where $H_{n}=\sum_{k=1}^{n}\frac{1}{k}$ is the $n$'th harmonic number, is
one of the most important constants in mathematics, maybe the third next to $%
\pi $ and $e$. It is known that the classical sequence $\gamma
_{n}=H_{n}-\log n$ converges to $\gamma $ very slowly. In fact, Young \cite%
{Young.75.422.1991} proved that the sequence $\left( \gamma _{n}\right) $
converges to $\gamma $ as $n^{-1}$. As a consequence, many mathematicians
tried to define new sequences convergent to this constant with increasingly
higher speed. For example, DeTemple \cite{DeTemple.100.5.1993}, \cite%
{DeTemple.160.1991} introduced a faster sequence $T_{n}=H_{n}-\log (n+1/2)$
and showed that the sequence $\left( T_{n}\right) $ convergent to $\gamma $
like $n^{-2}$. In \cite{Toch.94.8.1989}, Toth defined the sequence%
\begin{equation*}
T_{n}=H_{n}-\ln \left( n+\frac{1}{2}+\frac{1}{24n}\right) ,
\end{equation*}%
which converges to $\gamma $ as $n^{-3}$ proved by Negoi in \cite%
{Negoi.15.1997}. Batir \cite{Batir.6.4.2005.art.103} gave a sequence $\left(
\mu _{n}\right) $ defined by 
\begin{equation*}
\mu _{n}=H_{n}+\frac{1}{2}\ln \frac{e^{1/(n+1)}-1}{n+1/2},
\end{equation*}%
which converges to $\gamma $ as $n^{-3}$ proved by Motici in \cite%
{Mortici.26.1.2010}. In a very recent papers \cite{Merkle.0.4.2010}, \cite%
{Mortici.23.1.2010}, \cite{Mortici.26.1.2010}, \cite{Mortici.3.1.2011}, \cite%
{Mortici.32.4.2011}, \cite{Mortici.57.1.2011}, \cite{Mortici.68.3.2010}, 
\cite{Mortici.8.1.2010}, C. Mortici established serval new sequences
converge to $\gamma $ at faster rate, for instance, he \cite%
{Mortici.8.1.2010} proved that the sequences $(u_{n})$ and $\left(
v_{n}\right) $ defined by, respectively, 
\begin{eqnarray*}
u_{n} &=&H_{n-1}+\frac{1}{\left( 6-2\sqrt{6}\right) n}-\ln \left( n+1/\sqrt{6%
}\right) , \\
v_{n} &=&H_{n-1}+\frac{1}{\left( 6+2\sqrt{6}\right) n}-\ln \left( n-1/\sqrt{6%
}\right)
\end{eqnarray*}%
converge to $\gamma $ as $n^{-3}$, and $\left( \delta _{n}\right) $ converge
to $\gamma $ as $n^{-4}$, where $\delta _{n}$ is the arithmetic mean between 
$u_{n}$ and $v_{n}$. Another sequence converges to $\gamma $ as $n^{-4}$ is $%
\left( \alpha _{n}\right) $ defined by 
\begin{equation*}
\alpha _{n}=H_{n-2}+\frac{23}{24\left( n-1\right) }+\frac{1}{24n}-\ln \left(
n-1/2\right)
\end{equation*}%
given in \cite{Mortici.8.1.2010}. He also gave a sequence that speed of
convergence is $n^{-5}$ in \cite{Mortici.3.1.2011}, however, it is
complicated.

In \cite{Batir.2011.inprint} Batir proposed two better estimations for $%
\gamma $: $\left( \sigma _{n}\right) $ defined by (\ref{Batir seq.}) and $%
\left( \theta _{n}\right) $ defined by 
\begin{equation*}
\theta _{n}=H_{n}+\frac{1}{2}\ln \frac{e^{2/(n+1)}-1}{2n+2},
\end{equation*}%
which also converge to $\gamma $ as $n^{-4}$. Furthermore, he defined $%
\left( \tau _{n}\right) $ as 
\begin{equation*}
\tau _{n}=\frac{\theta _{n}+\sigma _{n}}{2}=H_{n}+\frac{1}{4}\ln \frac{%
e^{2/(n+1)}-1}{2n^{3}+4n^{2}+8n/3+2/3}
\end{equation*}%
and pointed out that the sequence $\left( \tau _{n}\right) $ converges to $%
\gamma $ at rate faster than $n^{-4}$ without proving. Indeed, it is not
difficult to show that 
\begin{equation*}
\lim_{n\rightarrow \infty }n^{5}\left( \tau _{n}-\gamma \right) =-\frac{1}{%
180},
\end{equation*}%
which implies that $\left( \tau _{n}\right) $ converges to $\gamma $ like $%
n^{-5}$. So far, this is one of the best results.

Now we consider our results in this paper. For $a\in (1/105,\infty )$, we
define a sequence with a parameter $\left( l_{n}\left( a\right) \right) $ as 
\begin{equation}
l_{n}\left( a\right) =H_{n}-\tfrac{1}{90a^{2}+2}\ln \left( n^{2}+n+\tfrac{%
3a+1}{3}\right) -\tfrac{45a^{2}}{90a^{2}+2}\ln \left( n^{2}+n+\tfrac{15a-1}{%
45a}\right)  \label{l_n(a)}
\end{equation}%
From (\ref{2.0}) it is easy to get 
\begin{equation*}
l_{n}\left( \infty \right) =H_{n}-\tfrac{1}{2}\ln \left( n^{2}+n+\tfrac{1}{3}%
\right) =\sigma _{n}.
\end{equation*}%
By the relation $\psi \left( n+1\right) =H_{n}-\gamma $ we have 
\begin{equation}
l_{n}\left( a\right) -\gamma =\psi \left( n+1\right) -\mathcal{L}\left(
n,a\right) .  \label{L and l}
\end{equation}%
Thus the limit relations (\ref{1.3b}) and (\ref{1.3c}) can be written as 
\begin{eqnarray*}
\lim_{n\rightarrow \infty }n^{6}\left( l_{n}\left( a\right) -\gamma \right)
&=&-\tfrac{1}{85050a}\left( a-\tfrac{40+3\sqrt{205}}{105}\right) \left( a-%
\tfrac{40-3\sqrt{205}}{105}\right) , \\
\lim_{n\rightarrow \infty }n^{8}\left( l_{n}\left( a_{1}\right) -\gamma
\right) &=&-\tfrac{2}{1225}.
\end{eqnarray*}%
These show that for every $a\in (1/105,\infty )$ the sequence $\left(
l_{n}\left( a\right) \right) $ converges to $\gamma $ as $n^{-6}$ if $a\neq
a_{1}$ and as $n^{-8}$ if $a=a_{1}$.

Not only that, from Corollaries \ref{Corollary H_na0} and \ref{Corollary
H_na1}, we have

\begin{theorem}
Let the sequence $\left( l_{n}\left( a\right) \right) $ be defined by (\ref%
{l_n(a)}) where $a\in (1/15,\infty )$. Then for all $n\in \mathbb{N}$, 
\begin{equation*}
l_{n}\left( a_{1}\right) <\gamma \leq l_{n}\left( a_{0}\right)
\end{equation*}%
hold, where $a_{1}=\left( 40+3\sqrt{205}\right) /105$ is the best constant, $%
a_{0}\approx 0.512967071402$ is defined in Theorem \ref{MT Psi>L}. Also, for
every $n\in \mathbb{N}$, $l_{n}\left( a\right) $ is strictly decreasing with 
$a$ on $\left( 1/15,\infty \right) $.
\end{theorem}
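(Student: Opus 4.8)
The plan is to reduce all three assertions to properties of $F_a$ that were already established in Sections 3 and 4, via the identity \ref{L and l}, namely
$l_n(a)-\gamma=\psi(n+1)-\mathcal{L}(n,a)=F_a(n)$.
Under this identity, the claim $l_n(a_1)<\gamma$ becomes $F_{a_1}(n)<0$, the claim $\gamma\le l_n(a_0)$ becomes $F_{a_0}(n)\ge 0$, and the monotonicity of $a\mapsto l_n(a)$ becomes the monotonicity of $a\mapsto\mathcal{L}(n,a)$, since $H_n$ does not depend on $a$. All three are then read off from the earlier results, specialized to the integer points $x=n\ge 1>0$.

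For the upper estimate I would invoke the upper half of Corollary \ref{Corollary H_na1} (equivalently Theorem \ref{MT Psi<L}): since $a_1\ge a_1$, one has $H_n<\mathcal{L}(n,a_1)+\gamma$ for every $n$, i.e.\ $F_{a_1}(n)<0$, which is exactly $l_n(a_1)<\gamma$. To see that $a_1$ is best, I would use the limit relation \ref{1.3b}, rewritten in terms of the sequence as $\lim_{n\to\infty}n^{6}(l_n(a)-\gamma)=-\tfrac{1}{85050a}(a-a_1)(a-\bar a_1)$ with $\bar a_1=(40-3\sqrt{205})/105<0$. For $a\in(1/15,a_1)$ the factor $(a-a_1)$ is negative while $(a-\bar a_1)$ is positive, so the right-hand side is strictly positive; hence $l_n(a)>\gamma$ for all sufficiently large $n$, and no parameter smaller than $a_1$ can keep $l_n(a)<\gamma$ for every $n$. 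This is precisely the optimality of $a_1$.

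For the lower estimate I would apply Theorem \ref{MT Psi>L} with $a=a_0$: its proof shows $F_{a_0}(x)>0$ for all $x>0$ (its only zero on $[0,\infty)$ being $x=0$), so $F_{a_0}(n)>0$ for each $n\ge 1$, giving $\gamma<l_n(a_0)$ and hence the stated $\gamma\le l_n(a_0)$. (Alternatively one can let $a\uparrow a_0$ in the strict bound $\gamma<l_n(a)$ of Corollary \ref{Corollary H_na0}, valid on $(1/15,a_0)$, and recover $\gamma\le l_n(a_0)$ by continuity.) Finally, for the monotonicity I would differentiate, $\partial l_n/\partial a=-\partial\mathcal{L}(n,a)/\partial a$, and use that by Lemma \ref{Lemma prop. L} the map $a\mapsto\mathcal{L}(x,a)$ is strictly increasing on $(1/15,\infty)$ for every $x>0$ (the extension already used in the proof of Theorem \ref{MT Psi>L}, which follows from $\partial^2\mathcal{L}/\partial x\,\partial a<0$ for $x>0$ together with $\partial\mathcal{L}/\partial a\to 0$ as $x\to\infty$). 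Thus $\partial l_n/\partial a<0$, and $l_n(a)$ is strictly decreasing in $a$ on $(1/15,\infty)$ for each fixed $n$.

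I do not expect a genuine obstacle here, since every piece is a corollary of the monotonicity and convexity machinery of Theorems \ref{MT F_a1}, \ref{MT Psi<L} and \ref{MT Psi>L}. The only points that require care are the optimality claim for $a_1$, where one must make sure that the sign of the $n^{-6}$ coefficient (and not merely of higher-order corrections) controls $l_n(a)-\gamma$ for large $n$, and the boundary value $a=a_0$, where Corollary \ref{Corollary H_na0} is stated only on the open interval $(1/15,a_0)$, so that one must either quote the strict positivity of $F_{a_0}$ on $(0,\infty)$ from Theorem \ref{MT Psi>L} or pass to the limit to recover the non-strict inequality.
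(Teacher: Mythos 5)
Your proposal is correct and takes essentially the same route as the paper, which presents this theorem as an immediate consequence of Corollaries \ref{Corollary H_na1} and \ref{Corollary H_na0} (themselves specializations of Theorems \ref{MT Psi<L} and \ref{MT Psi>L} at $x=n$) together with the monotonicity of $a\mapsto\mathcal{L}(n,a)$ from Lemma \ref{Lemma prop. L}. Your added care at the endpoint $a=a_0$ (invoking Theorem \ref{MT Psi>L} directly, since Corollary \ref{Corollary H_na0} covers only the open interval $(1/15,a_0)$) and the $n^{-6}$ limit argument for the optimality of $a_1$ are exactly the details the paper leaves implicit.
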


It is clear that our sequence $\left( l_{n}\left( a\right) \right) $ defined
by (\ref{l_n(a)}) gives very accurate values for $\gamma $ than the
approximations mentioned above, which also can be seen in the following
table (for convenience, we take $a=1/2<a_{0}$).%
\begin{equation*}
\begin{tabular}{|l|c|c|c|c|}
\hline
$n$ & $|\delta _{n}-\gamma |$ & $|\tau _{n}-\gamma |$ & $|l_{n}\left(
a_{1}\right) -\gamma |$ & $|l_{n}\left( 1/2\right) -\gamma |$ \\ \hline
\multicolumn{1}{|r|}{$1$} & \multicolumn{1}{|r|}{$1.3945\times 10^{-2}$} & 
\multicolumn{1}{|r|}{$2.825\,1\times 10^{-4}$} & \multicolumn{1}{|r|}{$%
2.178\,4\times 10^{-5}$} & $4.139\,7\times 10^{-5}$ \\ \hline
\multicolumn{1}{|r|}{$2$} & \multicolumn{1}{|r|}{$9.169\,6\times 10^{-4}$} & 
\multicolumn{1}{|r|}{$3.254\,6\times 10^{-5}$} & \multicolumn{1}{|r|}{$%
6.675\,8\times 10^{-7}$} & $3.225\,5\times 10^{-6}$ \\ \hline
\multicolumn{1}{|r|}{$5$} & \multicolumn{1}{|r|}{$2.425\times 10^{-5}$} & 
\multicolumn{1}{|r|}{$8.663\,6\times 10^{-7}$} & \multicolumn{1}{|r|}{$%
1.743\,1\times 10^{-9}$} & $3.771\,7\times 10^{-8}$ \\ \hline
\multicolumn{1}{|r|}{$10$} & \multicolumn{1}{|r|}{$1.524\,6\times 10^{-6}$}
& \multicolumn{1}{|r|}{$3.847\,9\times 10^{-8}$} & \multicolumn{1}{|r|}{$%
1.070\,4\times 10^{-11}$} & $8.271\,1\times 10^{-10}$ \\ \hline
\multicolumn{1}{|r|}{$50$} & \multicolumn{1}{|r|}{$2.444\,2\times 10^{-9}$}
& \multicolumn{1}{|r|}{$1.649\,9\times 10^{-11}$} & \multicolumn{1}{|r|}{$%
3.854\,4\times 10^{-17}$} & $6.833\,8\times 10^{-14}$ \\ \hline
\multicolumn{1}{|r|}{$100$} & \multicolumn{1}{|r|}{$1.527\,7\times 10^{-10}$}
& \multicolumn{1}{|r|}{$5.351\,7\times 10^{-13}$} & \multicolumn{1}{|r|}{$%
1.568\,2\times 10^{-19}$} & $1.100\,9\times 10^{-15}$ \\ \hline
\multicolumn{1}{|r|}{$200$} & \multicolumn{1}{|r|}{$9.548\,6\times 10^{-12}$}
& \multicolumn{1}{|r|}{$1.703\,9\times 10^{-14}$} & \multicolumn{1}{|r|}{$%
6.250\,9\times 10^{-22}$} & $1.746\,4\times 10^{-17}$ \\ \hline
\multicolumn{1}{|r|}{$500$} & \multicolumn{1}{|r|}{$2.444\,4\times 10^{-13}$}
& \multicolumn{1}{|r|}{$1.764\,5\times 10^{-16}$} & \multicolumn{1}{|r|}{$%
4.143\,0\times 10^{-25}$} & $7.218\,1\times 10^{-20}$ \\ \hline
\end{tabular}%
\end{equation*}

Furthermore, from Corollaries \ref{MC F_a1} and \ref{MC F M-C}, we see that
our sequence $\left( l_{n}\left( a\right) \right) $ has well properties,
such as monotonicity and concavity, which are stated as follows.

\begin{theorem}
Let $n\in \mathbb{N}$. Then the sequence $\left( l_{n}\left( a\right)
\right) $ is strictly increasing and concave if $a\geq a_{1}=\left( 40+3%
\sqrt{205}\right) /105$, and decreasing and convex if $a\in \left(
1/15,a_{0}^{\prime \prime }\right) $, where $a_{0}^{\prime \prime }\approx
0.4321803644583305$ is defined in \ref{MT F_a0''}..
\end{theorem}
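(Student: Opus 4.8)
The plan is to reduce everything to the continuous monotonicity and convexity results already established in Corollaries \ref{MC F_a1} and \ref{MC F M-C}. The key observation is the identity recorded in (\ref{L and l}),
\begin{equation*}
l_{n}(a)-\gamma =\psi (n+1)-\mathcal{L}(n,a)=F_{a}(n),
\end{equation*}
which says that the sequence $\left( l_{n}(a)\right) _{n\in \mathbb{N}}$ is obtained by sampling the continuous function $x\mapsto F_{a}(x)$ at the positive integers and adding the constant $\gamma $. Since adding a constant alters neither monotonicity nor convexity, it suffices to transfer the corresponding properties of $F_{a}$ from $(0,\infty )$ to the integer lattice.

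First I would treat the case $a\geq a_{1}$. By Corollary \ref{MC F_a1}, $F_{a}$ is strictly increasing and concave on $(0,\infty )$ precisely when $a\geq a_{1}$. Strict monotonicity gives $F_{a}(n+1)>F_{a}(n)$ for every $n$, that is $l_{n+1}(a)>l_{n}(a)$, so $\left( l_{n}(a)\right) $ is strictly increasing. For concavity of the sequence I would invoke the midpoint characterization of a concave function: taking $x=n$ and $y=n+2$ yields
\begin{equation*}
F_{a}(n+1)=F_{a}\!\left( \tfrac{n+(n+2)}{2}\right) \geq \tfrac{1}{2}\bigl( F_{a}(n)+F_{a}(n+2)\bigr) ,
\end{equation*}
which rearranges to the negativity of the second difference $l_{n+2}(a)-2l_{n+1}(a)+l_{n}(a)\leq 0$; hence $\left( l_{n}(a)\right) $ is concave.

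The case $a\in \left( 1/15,a_{0}^{\prime \prime }\right) $ is handled symmetrically. By Corollary \ref{MC F M-C}, $F_{a}$ is decreasing and convex on $(0,\infty )$ for exactly these $a$. Decreasing monotonicity gives $l_{n+1}(a)<l_{n}(a)$, so the sequence decreases, and the midpoint inequality for a convex function (with the inequality reversed) gives $l_{n+2}(a)-2l_{n+1}(a)+l_{n}(a)\geq 0$, so the sequence is convex.

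I do not anticipate any genuine obstacle: the entire analytic difficulty has already been absorbed into the proofs of Theorems \ref{MT F_a1}, \ref{MT F_a0'}, \ref{MT F_a0''} and the two corollaries, where the sign conditions on $F_{a}^{\prime }$ and $F_{a}^{\prime \prime }$, together with the limit computations pinning down the thresholds $a_{1}$ and $a_{0}^{\prime \prime }$, were established. The only new ingredient is the elementary fact that restricting a monotone (respectively convex or concave) continuous function to equally spaced points preserves monotonicity (respectively convexity or concavity), which the midpoint inequalities above make transparent. The mild point worth flagging is that the integers are spaced by one, which is exactly the step size entering the finite differences, so no rescaling is required and the sign of the second difference reads off directly from the sign of $F_{a}^{\prime \prime }$.
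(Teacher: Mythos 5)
Your proposal is correct and follows the same route as the paper, which simply derives this theorem from Corollaries \ref{MC F_a1} and \ref{MC F M-C} via the identity $l_{n}(a)-\gamma =F_{a}(n)$ in (\ref{L and l}). Your explicit midpoint-inequality argument for transferring concavity/convexity from $F_{a}$ to the sampled sequence merely fills in a step the paper leaves implicit, so there is nothing to add.
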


\section{\textsc{Open Problems}}

Motivated by Theorem \ref{MT F_a1}, we post the following open problems.

\begin{problem}
Let $\mathcal{L}(x,a)$ be defined by \ref{L(x,a)}. Prove that $%
-F_{a_{1}}\left( x\right) =\mathcal{L}(x,a_{1})-\psi \left( x+1\right) $ is
a completely monotonic function on $(-1,\infty )$.
\end{problem}

\end{document}